\newtheorem{theorem}{Theorem}[section]
\newtheorem{lemma}[theorem]{Lemma}
\newtheorem{corollary}[theorem]{Corollary}
\newtheorem{proposition}[theorem]{Proposition}
\newtheorem*{theorem*}{Theorem}
\newtheorem*{corollary*}{Corollary}
\theoremstyle{definition}
\newtheorem{definition}[theorem]{Definition}
\newtheorem{example}[theorem]{Example}
\newtheorem{remark}[theorem]{Remark}
\begin{document}
\title{A Poisson limit theorem for Gibbs-Markov maps}
\author{Xuan Zhang}
\address{Instituto de Matemática e Estatística, Universidade de São Paulo}
\email{xuan@ime.usp.br}
\date{}
%\thanks{The work was supported by..}
\subjclass[2010]{11K50, 60F05, 37C30}
\begin{abstract}
We prove for Gibbs-Markov maps that the number of visits to a sequence of shrinking sets with bounded cylindrical lengths converges in distribution to a Poisson law. Applying to continued fractions, this result extends Doeblin's Poisson limit theorem.
\end{abstract}
\maketitle

\section{Introduction}
A Poisson limit theorem for continued fractions was proved by Doeblin (\cite{doeblin1940}) around 1937, which says that the number of certain large partial quotients in a continued fraction expansion converges in distribution to a Poisson law. More precisely, denoting by $[a_1,a_2,\ldots], a_i\in\mathbb N,$ the continued fraction expansion of an irrational number $x\in (0,1)$,
\begin{theorem*} 
For every $\theta>0, k\in\mathbb N_0$
\begin{multline*}
\lim_{n\rightarrow\infty}{\rm Leb}\{x\in(0,1): \text{there are exactly $k\ a_i$'s with $a_i>{\theta n}$, $1\leqslant i\leqslant n$}\}\\
=\frac{1}{(\theta\log2)^k}\frac{1}{k!}e^{-\frac{1}{\theta\log2}}.
\end{multline*}
\end{theorem*}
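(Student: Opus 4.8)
The plan is to restate Doeblin's theorem as a statement about the Gauss map and then read it off from the Poisson limit theorem for Gibbs--Markov maps that is the main result of this paper. Recall that the Gauss map $Tx = 1/x - \lfloor 1/x\rfloor$ on $(0,1)$ preserves the Gauss measure $\mathrm d\mu = \frac{1}{\log 2}\frac{\mathrm dx}{1+x}$, and that $(T,\mu)$, together with the partition into the rank-one cylinders $[a] = \{x : a_1(x) = a\} = (\frac{1}{a+1},\frac1a)$, $a\in\mathbb N$, is a Gibbs--Markov map (indeed full-branch and topologically mixing), where $a_1(x) = \lfloor 1/x\rfloor$ and $a_i(x) = a_1(T^{i-1}x)$. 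Hence $\{x : a_i(x) > \theta n\} = T^{-(i-1)}E_n$ with
\[
E_n := \bigcup_{a > \theta n}[a], \qquad N_n(x) := \sum_{i=0}^{n-1}\mathbf{1}_{E_n}(T^i x),
\]
so that $E_n$ equals, up to countably many points, the interval $(0,\tfrac{1}{\lfloor\theta n\rfloor+1})$, and the set in the theorem is precisely $\{x : N_n(x) = k\}$. It therefore suffices to prove that $N_n$ converges in distribution to a Poisson law of parameter $\lambda = \frac{1}{\theta\log 2}$, first under $\mu$ and then under Lebesgue measure.

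Next I would verify the hypotheses of the main theorem for the sequence $(E_n)$. Each $E_n$ is a countable union of cylinders of length $1$, so the bounded-cylindrical-length condition holds trivially (with bound $1$); and $(E_n)$ shrinks, since
\[
\mu(E_n) = \frac{1}{\log 2}\,\log\!\Bigl(1 + \tfrac{1}{\lfloor\theta n\rfloor+1}\Bigr)\longrightarrow 0 ,
\]
with the correct normalization $n\,\mu(E_n)\to\frac{1}{\theta\log 2} = \lambda$. The main theorem then gives $N_n\to\mathrm{Poisson}(\lambda)$ in distribution with respect to $\mu$. (Alternatively one can argue directly by factorial moments: the exponential decay of correlations of the Gibbs--Markov map $T$ yields $\mathbb E_\mu\bigl[\binom{N_n}{r}\bigr]\to\lambda^r/r!$ for every $r\ge 1$, the sole place where bounded cylindrical length enters being the estimate $\mu(E_n\cap T^{-1}E_n) = O(\mu(E_n)^2)$ that kills the clustering corrections.)

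Finally I would transfer from $\mu$ to Lebesgue measure. By the Gauss--Kuzmin theorem the densities of $T^m_*\mathrm{Leb}$ converge uniformly to that of $\mu$, so $\|T^m_*\mathrm{Leb}-\mu\|_{\mathrm{TV}}\to 0$ as $m\to\infty$. Since $\mu$ is $T$-invariant, $N_n$ has the same $\mu$-law as the delayed count $\widetilde N_{n,m} := \sum_{i=m}^{m+n-1}\mathbf{1}_{E_n}(T^i x)$, which is a function of $T^m x$, so its Lebesgue law differs from its $\mu$-law by at most $\|T^m_*\mathrm{Leb}-\mu\|_{\mathrm{TV}}$ in total variation; and for fixed $m$, $\mathrm{Leb}(N_n\ne\widetilde N_{n,m})$ is bounded by the expected number of visits to $E_n$ in $2m$ steps, which is $O(m/n)\to 0$. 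Choosing $m$ large and then $n$ large, these three facts give $N_n\to\mathrm{Poisson}(\lambda)$ in distribution under Lebesgue measure, which is Doeblin's theorem. The real obstacle is not in this deduction but in the general theorem it invokes: showing that the visit events $\{T^i x\in E_n\}$ become asymptotically independent \emph{uniformly} as $E_n$ shrinks, and ruling out short returns to $E_n$ — and it is precisely there that the bounded cylindrical length of the target sets is used.
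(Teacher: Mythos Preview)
Your proposal is correct and follows the same route as the paper: apply the main theorem (via its corollary for sets in $\sigma(\alpha)$) to $A_n=(0,\frac{1}{\lfloor\theta n\rfloor+1})$ for the Gauss map, then transfer from the Gauss measure to Lebesgue. Your Gauss--Kuzmin/delayed-count argument for the transfer is a more explicit version of the paper's one-line appeal to $|\mu(A)-\mathrm{Leb}(A)|\leqslant\epsilon_m$ for $A\in\sigma(\alpha_m^\infty)$, but the idea is the same.
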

Later Iosifescu (\cite{Iosifescu1977}) proved a Poisson limit theorem for $\psi$-mixing processes, which in particular can be applied to the process $\{a_n\}_{n\in\mathbb N}$. Treating continued fractions as a symbolic dynamical system, one can rephrase Doeblin's theorem as a Poisson limit theorem for return times under the Gauss map $T: x\mapsto \{1/x\}$.
\begin{theorem*} 
For every $\theta>0, k\in\mathbb N_0$
$$
\lim_{n\rightarrow\infty}{\rm Leb}\left\{x\in(0,1): \sum_{i=0}^{n-1}{\bf 1}_{(0,\frac{1}{[\theta n]+1})}\circ T^i(x)=k\right\}
=\frac{1}{(\theta\log2)^k}\frac{1}{k!}e^{-\frac{1}{\theta\log2}}.
$$
\end{theorem*}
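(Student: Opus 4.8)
The plan is to deduce this statement directly from Doeblin's theorem stated above, by rewriting the dynamical counting function on the left-hand side as a count of large partial quotients. If $x\in(0,1)$ is irrational with continued fraction expansion $[a_1,a_2,\ldots]$, then the Gauss map acts as the one-sided shift, $T^i(x)=[a_{i+1},a_{i+2},\ldots]$, so that $1/T^i(x)=a_{i+1}(x)+T^{i+1}(x)$ lies strictly between the consecutive integers $a_{i+1}(x)$ and $a_{i+1}(x)+1$. Hence for any integer $m\geqslant1$,
\[
{\bf 1}_{(0,1/m)}\circ T^i(x)=1\iff 1/T^i(x)>m\iff a_{i+1}(x)\geqslant m,
\]
the last equivalence because $a_{i+1}(x)\geqslant m$ forces $1/T^i(x)>a_{i+1}(x)\geqslant m$, while $a_{i+1}(x)\leqslant m-1$ forces $1/T^i(x)<a_{i+1}(x)+1\leqslant m$. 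Taking $m=[\theta n]+1$ and noting that for an integer $a$ one has $a>[\theta n]\iff a\geqslant[\theta n]+1\iff a>\theta n$ (trivial when $\theta n\in\mathbb Z$, and otherwise because $[\theta n]<\theta n<[\theta n]+1$), this gives
\[
{\bf 1}_{(0,\frac1{[\theta n]+1})}\circ T^i(x)=1\iff a_{i+1}(x)>\theta n .
\]

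Next I would sum over $i=0,\ldots,n-1$, which runs over exactly the partial quotients $a_1,\ldots,a_n$, to obtain
\[
\sum_{i=0}^{n-1}{\bf 1}_{(0,\frac1{[\theta n]+1})}\circ T^i(x)=\#\{\,1\leqslant i\leqslant n: a_i(x)>\theta n\,\}
\]
for every irrational $x\in(0,1)$. Since every iterate $T^i(x)$ of an irrational $x$ is again irrational, the counting function is defined on a set of full Lebesgue measure, and there the set appearing in the statement coincides with the set of $x$ having exactly $k$ partial quotients among $a_1,\ldots,a_n$ that exceed $\theta n$. Its Lebesgue measure tends to $\frac1{(\theta\log2)^k}\frac1{k!}e^{-1/(\theta\log2)}$ as $n\to\infty$ by Doeblin's theorem, which is the claim.

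As all the steps reduce to elementary identities, I do not expect any substantive obstacle at this level: the only care needed is the bookkeeping linking the floor $[\theta n]$ in the shrinking interval to the real threshold $\theta n$ in Doeblin's statement, together with the harmless passage between the open interval and its half-open version. The genuine difficulty is pushed entirely onto Doeblin's theorem itself; reproving it — and extending it — through the Gibbs–Markov structure of the Gauss map is the task carried out in the remainder of the paper.
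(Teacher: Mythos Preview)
Your proposal is correct. The identity
\[
\mathbf 1_{(0,\frac{1}{[\theta n]+1})}\circ T^i(x)=\mathbf 1_{\{a_{i+1}>\theta n\}}(x)
\]
for irrational $x$ is exactly the ``rephrasing'' the paper refers to when it introduces this second formulation of Doeblin's theorem; the paper does not spell out a separate proof there because the two statements are set-theoretically the same on a set of full Lebesgue measure, as you show.

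That said, the paper later provides an \emph{independent} proof that does not quote Doeblin's original result: Corollary~\ref{cor:poissoncor}(1) is applied to $A_n=(0,\frac{1}{[\theta n]+1})\in\sigma(\alpha)$, and the Poisson limit (with respect to the Gauss measure $\mu$, then transferred to Lebesgue measure via the asymptotic independence remark) follows from the general Gibbs--Markov machinery of Theorem~\ref{thm:main}. So your route and the paper's route run in opposite directions: you reduce the dynamical statement to Doeblin's classical one, while the paper derives Doeblin's theorem as a special case of its main spectral/perturbative result. Your argument is shorter and entirely elementary, but of course it presupposes Doeblin's theorem; the paper's argument is self-contained and yields the broader class of results in Section~4.
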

Compared with the Poisson limit theorem for binomial random variables, an ideal Poisson limit theorem in dynamical systems would be the following. Let $(\Omega,\mathcal B,\mu)$ be a probability space, $T$ be a $\mu$-preserving map on $\Omega$, and $\{A_n\}_{n\in\mathbb N_0}$ be a sequence of measurable sets. If $n\mu(A_n)\to t>0$ as $n\to\infty$, then for every $k\in\mathbb N_0$ 
\begin{equation*}%\label{eq:poigen}
\lim_{n\to\infty}\mu\left(\left\{x\in\Omega:\sum_{i=0}^{n-1}{\bf 1}_{A_n}\circ T^i(x)=k\right\}\right)= \frac{t^k}{k!}e^{-t}.
\end{equation*}
The summation inside the brackets counts the number of visits of $x$ to $A_n$ until the $n$-th iteration by $T$. Note that, since $\{\mathbf 1_{A_n}\circ T^i\}_{i\in\mathbb N_0}$ in general is not an independent process, it is not clear that the limit distribution exists, let alone being Poisson. The study of Poisson limit law for return times in dynamical systems was initiated in the early 1990s by  Hirata (\cite{Hirata1993}), Pitskel (\cite{Pitskelprime1991}), Sinai (\cite{Sinaui1990}) etc. Since then, there are various results confirming the Poisson limit theorem and also obtaining error estimates under appropriate assumptions. See for example \cite{CoelhoCollet1994, Denker1995, HirataSaussolVaienti1999, Haydn2000, Dolgopyat2004, DenkerGordinSharova2004, AbadiVergne2008, ChazottesCollet2013, PeneSaussol2016, HaydnYang2016}. But in general, the class of possible limit distributions is rather large, more than just Poisson, as shown by Lacroix (\cite{Lacroix2002}) and Chaumo\^itre and Kupsa (\cite{Chaumoitrekupsa2006}), We refer to Haydn's recent review (\cite{Haydn2013}) on this and related topics.

Let $n\mu(A_n)\to t$ as $n\to \infty$, then the measure $\mu(A_n)$ shrinks to $0$. Most of the previous results consider a sequence of shrinking sets $A_n$ which converge to a generic non-periodic point (like in Theorem \ref{thm:poissonap} below). When $A_n$ shrinks to a periodic point, $A_n$ always intersects with its pre-images via small iterations of $T$. In this case it had been pointed out by both Pitskel and Hirata that the limit distribution can not be Poisson.
Later Haydn and Vaienti (\cite{HaydnVaienti2009}) showed for some mixing maps, that when $A_n$ are cylinder sets of growing cylindrical lengths (the level of the filtration in which $A_n$ lives)  and converging to a periodic point, the limit distribution of the number of return times becomes compound Poisson. But none of these results covers Doeblin's theorem, in which case $A_n=(0, \frac1{[\theta n]+1})$ 
converges to a non-generic point, the endpoint $0$, and has a constant cylindrical length. 

In this note we study the limit distribution of the number of visits to a sequence of shrinking sets with bounded cylindrical lengths. When the sequence converges to a single point, the accumulation point may be periodic or a compactification point of the space (like the endpoint $0$ in Doeblin's theorem). 
This type of sequences seems novel in the literature to our knowledge. Our main result (Theorem \ref{thm:main}) shows a Poisson limit theorem for Gibbs-Markov maps with this type of sequences. Some applications to continued fractions are given at the end of the note, including an extension of Doeblin's theorem.

Examples of Gibbs-Markov maps are given in the next section, including some Markov chains with countable states, Markov maps of the unit interval, parabolic rational maps. 
We use perturbation of transfer operator to study limit distribution of the number of return times, following the idea of Hirata (\cite{Hirata1993}). It turns out that for the type of sequences in our consideration, the size of this perturbation  is not asymptotically small in the Banach space where the transfer operator has good spectral property. Still, a perturbation theorem of Keller and Liverani (\cite{KellerLiverani1999}) allows us to deal with this situation using a weaker norm. We note that this perturbation theorem has been used widely by many authors to study closely-related problems, such as limit distribution of the first return time and escape rate, for example in \cite{KellerLiverani2009, FergusonPollicott2012, DemersWright2012, BruinDemersTodd2018}.

\section{Gibbs-Markov maps}
 \label{sec:GibbsMarkov}
We recall the definition of Gibbs-Markov maps from \cite{AaronsonDenker2001}. Let $(\Omega,\mathcal B, \mu)$ be a probability
space and let $T$ be a non-singular transformation. Consider a countable partition $\alpha$ of $\Omega\mod \mu$, let $\alpha=\{a_i: i\in I\}$. Denote by $\alpha_0^{n-1}$ the refined partition $\bigvee_{i=0}^{n-1}T^{-i}\alpha$ and by $\sigma(\cdot)$ the $\sigma$-algebra generated by a partition. For a set $A\in \sigma(\alpha_0^{n-1})$, we call the minimal $m\in\mathbb N$ such that $A\in \sigma(\alpha_0^{m-1})$ its cylindrical length.
\begin{definition}\label{def:gibbsmarkov}A quintuple $(\Omega,\mathcal{B},\mu,T,\alpha)$ is called a Gibbs-Markov map if it satisfies the following conditions. 
\begin{enumerate}
	\item $\alpha$ is a strong generator of $\mathcal{B}$ under $T$, i.e. $\sigma(\{T^{-n}\alpha:n\in\mathbb N_0\})=\mathcal{B} \mod \mu$.
	\item (big image property) $\inf\limits_{a\in\alpha}\mu(Ta)>0$.
	\item For every $a\in\alpha$, $Ta\in \sigma(\alpha) \mod \mu$, moreover the restriction $T|_{a}$ is invertible and non-singular. 
	\item For every $n\in\mathbb N$ and $a\in\alpha_0^{n-1}$, denote the non-singular inverse branch of $T^{-n}$ on $T^n a$ by $v_a: T^n a\rightarrow a$ and its Radon-Nikodym derivative by $v'_a$. There exist $r\in (0,1)$ and $M>0$ such that for any $n\in \mathbb N, a\in\alpha_0^{n-1}$ and $x,y\in T^n a$ a.e.
			$$\left|\dfrac{v'_a(x)}{v'_a(y)}-1\right|\leqslant M\cdot r(x,y),
			$$
\end{enumerate}
where $r(x,y)$ is the metric $r^{\min\{n\in\mathbb N: T^{n-1}(x) \text{ and } T^{n-1}(y) \text{ belong to different elements of } \alpha \}}.$ 
\end{definition}
We present some typical examples. The first two have already been mentioned in \cite{AaronsonDenker2001}. 

\begin{example}[Markov chain with countable states]\label{ex:mc} Let $S$ be a countable set, $P=(p_{x,y})_{x,y\in S}$ be an irreducible ergodic stochastic matrix on $S$ and $\pi$ be the stationary probability measure. Let $\Omega=\{x=(x_0,x_1,\ldots)\in S^{\mathbb{N}_0}:p_{x_k x_{k+1}}>0, k\in\mathbb N_0\}$, $\mathcal{B}$ the $\sigma$-field generated by all the cylinder sets of the form $[s_0, \cdots, s_n]=\{x\in\Omega: x_0=s_0,\ldots, x_n=s_n\}$, $T$ be the shift map, $\alpha=\{[s]:s\in S\}$, $\mu([s_0, \cdots, s_n])=\pi_{s_0}\prod\limits_{k=0}^{n-1}p_{s_k s_{k+1}}$. 

Suppose $a=[s_0, \cdots, s_n]$, then $v'_a(x)=\frac{\pi_{s_0}p_{s_0s_1}\cdots p_{s_{n-1}s_{n}}p_{s_{n}x_0}}{\pi_{x_0}}$ for $x\in T^na$. The system is Gibbs-Markov if and only if $\left|{\frac{p_{sx}}{\pi_x}}/{\frac{p_{sy}}{\pi_y}}-1\right|$ is uniformly bounded for all $s,x,y\in S$ with $p_{sx},p_{sy}>0$.

An explicit example is given by (\cite[XV($2.k$)]{Feller1968}):
\begin{equation*}
P=\begin{bmatrix}
	f_1 & f_2 & f_3 & f_4 &\cdots \\
	1 & 0 & 0 & 0 &\cdots \\
	0 & 1 & 0 & 0&\cdots \\
	0 & 0 & 1 & 0&\cdots \\
	\cdot & \cdot & \cdot & \cdot &\cdots\\
	\cdot & \cdot & \cdot & \cdot &\cdots
\end{bmatrix}
\end{equation*}
where $\sum_{i\in\mathbb N} f_i=1$. Let $r_k=\sum_{i>k}f_i$. Suppose $\sum_{k\in\mathbb N} r_k<\infty$, then the stationary measure $\pi=(\pi_0, \pi_0 r_1, \pi_0 r_2,\ldots)$. $T$ is Gibbs-Markov if and only if $C^{-1}\leqslant {\frac{f_i}{r_{i-1}}}/{\frac{f_j}{r_{j-1}}}\leqslant C$ for some constant $C>0$ and for all $i,j$. For example, $f_i=\frac{1}{i!}\frac{1}{e-1}$.
\end{example}

\begin{example}[Markov interval map, {\cite[\S 7.4]{CornfeldFominSinaui1982}}]\label{ex:mip}
Let $\Omega=(0,1)$, $\mathcal{B}$ be Borel $\sigma$-algebra, $\rm Leb$ be Lebesgue measure, and $\alpha=\{a_i\}$ be a partition of $\Omega \mod \mu$ into open intervals. Suppose that $T:\Omega\rightarrow \Omega$ satisfies the following conditions.
\begin{itemize}
\item $T|_{a_i}$ is strictly monotonic and extends to a $C^2$ function on $\overline{a}_i$ for each $i$.
\item if $T(a_k)\cap a_j\neq\emptyset$, then $T(a_k)\supset a_j$.
\item $T$ is expanding: there exists an $m\in\mathbb{N}$ such that
$$
\inf_{a_i\in\alpha}\inf_{x\in a_i} |T'(x)|>0,\quad
\inf_{a_i\in\alpha}\inf_{x\in a_i} |{T^m}'(x)|:=\lambda>1.
$$
\item $T$ satisfies the Adler property:
\begin{equation*}
\sup_{a_i\in\alpha}\sup_{x,y\in a_i}\frac{|T''(x)|}{T'(y)^2}<\infty.
\end{equation*}
\end{itemize}

Then there exists a constant $M$ such that:
\begin{equation*}
\frac{|{T^n}''(x)|}{{T^n}'(y)^2}\leqslant M, \text{ for all } x,y\in a\in\alpha_0^{n-1}, n\in\mathbb N.
\end{equation*}
Hence for some constant $M_1$, and for all  $n\in\mathbb N, a\in\alpha_0^{n-1}, x,y\in T^n a$,
\begin{equation*}
\left|\frac{v'_a(x)}{v'_a(y)}-1\right|\leqslant M_1 |x-y|.
\end{equation*}
Let $r=(1/\lambda)^{\frac1m}$. There is a constant $M_2$ such that for any $x,y$, if $k\in\mathbb N$ is the minimum natural number such that $T^{k-1}x$ and $T^{k-1}y$ belong to different elements of $\alpha$, supposing $x, y\in b\in \alpha_0^{k-1}$,
\begin{equation*}
|x-y|\leqslant {\rm Leb}(b)={\rm Leb} (v_b(T^k b))\leqslant \|v'_b\|_\infty\leqslant M_2 r^k.
\end{equation*}
The big image property may not always hold for $T$. If it does, then $T$ is a Gibbs-Markov map. For example, let $\alpha=\{\{a_1=n\}: n\in\mathbb N\}$ be the partition by the first partial quotient in the continued fraction expansion and let $T$ be the Gauss map: $x\mapsto \{\frac{1}{x}\}$. Then it is a Gibbs-Markov map. Furthermore, let $\mu$ be the Gauss measure $\mu(A)=\frac{1}{\log2}\int_A \frac{1}{1+x} dx$, then the continued fraction map $((0,1), \mathcal B, \mu, T, \alpha)$ is a probability-preserving Gibbs-Markov map.

The following example appears in \cite{AaronsonDenker1999} in the study of Poincar\'e series of the surface $\mathbb C\setminus \mathbb Z$.
Let 
\begin{equation*}
R(x)=\begin{cases}
\frac{x}{1-2x}, & x\in(0,\frac13)\\
\frac{1}{x}-2, & x\in(\frac13,\frac12)\\
2-\frac{1}{x}, & x\in(\frac12,1)
\end{cases}.
\end{equation*}
Denote $A:=(0,\frac13), B:=(\frac13,\frac12), C:=(\frac12,1)$, and $U:=A\cap R^{-1}A^c=(\frac15,\frac13), W:=C\cap R^{-1}C^c=(\frac12,\frac23), J:=U\cup B\cup W=(\frac15, \frac23).$
Induce $R$ on $J$, that is, let $$R_J(x)=R^{\tau}(x)$$ where $\tau=\min\{k\in\mathbb N: R^k(x)\in J\}$. Define $ B_1=B\cap R^{-1}J$, 
\begin{gather*}
U_n=U\cap\bigcap_{j=1}^{n-1}R^{-j}C\cap R^{-n}(B\cup W),\quad W_n=W\cap\bigcap_{j=1}^{n-1}R^{-j}A\cap R^{-n}(U\cup B),\\
B_n^{-}=B\cap\bigcap_{j=1}^{n-1} R^{-j}A\cap R^{-n}(U\cup B),\quad B_n^{+}=B\cap\bigcap_{j=1}^{n-1} R^{-j}C\cap R^{-n}(B\cup W),\\
\alpha_J=\{U_n, W_n, B_1, B_{n+1}^-, B_{n+1}^+: n\in\mathbb N\}.
\end{gather*}
Then $(J,\mathcal B|_J, {\rm Leb}, R_J, \alpha_J)$ is Gibbs-Markov.
\end{example}

\begin{example}[Rational functions on the Riemann sphere, \cite{DenkerUrbanski1991b}]\label{ex:rational}
Let $S^2$ denote the Riemann sphere. A rational function function $T:S^2\to S^2$ has the form $T(z)=\frac{P(z)}{Q(z)}$ when $T$ is restricted to $\mathbb C\subset S^2$ and where $P$ and $Q$ are polynomials $(Q\neq 0)$ of maximal degree $\geqslant 2$. Restrict $T$ to its Julia set $$J(T)=\{z\in S^2: \{T^n\}_{n\in\mathbb N} \text{ is not normal at } z\}.$$ These transformations include important examples for Gibbs-Markov maps. $T$ is called hyperbolic if $J(T)$ does not contain any critical point or rationally indifferent point. In this case $T:J(T)\to J(T)$ is expanding and has a finite Markov partition. 
For general $T: J(T)\to J(T)$ a probabilistic measure $\mu$ on $J(T)$ is called conformal for a continuous potential $\varphi:J(T)\to \mathbb R$ if $$\mu(TA)=\int_A e^{\varphi(y)}d\mu(y)$$
for every measurable $A$ where $T|_A:A\to TA$ is invertible. Equivalently, $\mu$ is characterized by
$$\mathcal L_{\varphi}^*\mu=\mu$$
where $\mathcal L_{\varphi}(f)(z)=\sum_{Ty=z} f(y)e^{-\varphi(y)}$ denotes the transfer operator. In case of a hyperbolic $T$, for any $\varphi$ there is $\lambda>0$ such that there is a $e^{\lambda\varphi}$-conformal measure $m$ by Sullivan's result \cite{Sullivan1983}. The system $(J(T), m, T, \alpha)$ is Gibbs-Markov where $\alpha$ is the finite Markov partition. The transformation $T$ is called parabolic if $J(T)$ does not contain critical points, but has rationally indifferent periodic points. In this case it is known by \cite{DenkerUrbanski1991b} that a conformal measure exists if the potential $\varphi$ satisfies $P(T,\varphi)>\sup\varphi$ where $P(T,\varphi)$ denotes the pressure of $\varphi$. So such measure exists for $\varphi=t\log|T'|$. 
Also $T$ admits a countable Markov partition. 
$(A, m|_A, T_A, \beta)$ is a Gibbs-Markov systems, whenever $A$ belongs to the Markov partition and $\beta$ is the first return time partition on $A$ which is also a (countable) Markov partition.
\end{example}

It is known that a probability-preserving, topologically mixing Gibbs-Markov map $(\Omega,\mathcal{B},\mu,T,\alpha)$ is continued-fraction mixing (exponential $\psi$-mixing), as stated in the next proposition. A Gibbs-Markov map is called topologically mixing if for any $a, b\in\alpha$, there is $n_{a,b}\in\mathbb N$ such that for every $n\geqslant n_{a,b}$, $b\subset T^n a$. The above examples are all topologically mixing. 

\begin{proposition}[{\cite[Corollary 4.7.8]{Aaronson1997}, see also \cite{AaronsonDenkerUrbanski1993}}]\label{prop:cfmixing}
A probability-preserving, topologically mixing Gibbs-Markov map is continued-fraction mixing. That is, there exist constants $K>0$ and $0<\theta<1$ such that for any $n,k\in\mathbb N$, $a\in\alpha_0^{k-1}$ and $B\in\mathcal B$
$$|\mu(a\cap T^{-n-k}B)-\mu(a)\mu(B)|\leqslant K\theta^n\mu(a)\mu(B).$$
\end{proposition}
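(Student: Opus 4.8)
The plan is to run the classical transfer-operator argument. Let $\hat T\colon L^1(\mu)\to L^1(\mu)$ be the transfer (Perron--Frobenius) operator of $T$ with respect to $\mu$, characterized by $\int \hat T f\cdot g\,d\mu=\int f\cdot (g\circ T)\,d\mu$ for $f\in L^1(\mu)$, $g\in L^\infty(\mu)$; concretely $\hat T^n f(x)=\sum_{T^n y=x}v'_{a(y,n)}(x)\,f(y)$, where $a(y,n)\in\alpha_0^{n-1}$ is the cylinder of length $n$ containing $y$, so that in particular $\hat T^k\mathbf 1_a=v'_a\,\mathbf 1_{T^k a}$ and $\int \hat T^k\mathbf 1_a\,d\mu=\mu(a)$ for $a\in\alpha_0^{k-1}$. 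I would work on the Banach space $\mathcal V$ of bounded functions with $\|f\|_{\mathcal V}:=\|f\|_\infty+\sup_{b\in\alpha}\ \sup_{x\neq y\in b}\frac{|f(x)-f(y)|}{r(x,y)}<\infty$, and assemble two ingredients: (i) a Gibbs-type bound $\|\hat T^k\mathbf 1_a\|_{\mathcal V}\leqslant C_0\,\mu(a)$, uniform in $k\in\mathbb N$ and $a\in\alpha_0^{k-1}$; and (ii) the Ruelle--Perron--Frobenius theorem for topologically mixing Gibbs--Markov maps: $\hat T$ is quasi-compact on $\mathcal V$ with spectral radius $1$, and since $\mu$ is a $T$-invariant probability the leading eigenvalue $1$ is simple with eigenfunction $\mathbf 1$ and eigenprojection $f\mapsto(\int f\,d\mu)\mathbf 1$, while topological mixing (aperiodicity of the Markov structure) rules out other unimodular eigenvalues. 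Hence $\hat T^n=P+N^n$ with $Pf=(\int f\,d\mu)\mathbf 1$, $PN=NP=0$, and $\|N^n\|_{\mathcal V\to\mathcal V}\leqslant C_1\theta^n$ for some $C_1>0$, $\theta\in(0,1)$.

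For (i): by condition (3) and an easy induction, for $a\in\alpha_0^{k-1}$ the set $T^{k-1}a$ equals (mod $\mu$) a single element of $\alpha$, so $T^k a=T(T^{k-1}a)$ is (mod $\mu$) a union of partition elements with $\mu(T^k a)\geqslant c:=\inf_{b\in\alpha}\mu(Tb)>0$ by condition (2). Since $r(x,y)\leqslant r<1$ always, condition (4) gives $v'_a(x)\leqslant(1+Mr)\,v'_a(y)$ for a.e.\ $x,y\in T^k a$; averaging in $y$ over $T^k a$ against $\mu$ and using $\int_{T^k a}v'_a\,d\mu=\mu(a)$ yields $v'_a(x)\leqslant(1+Mr)c^{-1}\mu(a)$ a.e., that is $\|\hat T^k\mathbf 1_a\|_\infty\leqslant(1+Mr)c^{-1}\mu(a)$; likewise for $x,y$ in the same partition element $|v'_a(x)-v'_a(y)|\leqslant v'_a(y)\,M\,r(x,y)\leqslant(1+Mr)Mc^{-1}\mu(a)\,r(x,y)$. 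Together these give (i). This step genuinely uses the Markov structure (condition 3) and the bounded distortion (condition 4); it is essential because it replaces $\mathbf 1_a$, whose own $\mathcal V$-norm is of order $r^{-k}$ and not controlled by $\mu(a)$, with a function of $\mathcal V$-norm $O(\mu(a))$.

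Given (i) and (ii) the estimate is immediate. For $a\in\alpha_0^{k-1}$ and $B\in\mathcal B$, duality and $\hat T^{n+k}=\hat T^n\circ\hat T^k$ give
\[
\mu(a\cap T^{-n-k}B)=\int\mathbf 1_a\cdot(\mathbf 1_B\circ T^{n+k})\,d\mu=\int \hat T^n\big(\hat T^k\mathbf 1_a\big)\cdot\mathbf 1_B\,d\mu .
\]
Setting $f:=\hat T^k\mathbf 1_a$, we have $\int f\,d\mu=\mu(a)$ and $\|f\|_{\mathcal V}\leqslant C_0\mu(a)$, so $\hat T^n f=\mu(a)\mathbf 1+N^n f$ and
\[
\big|\mu(a\cap T^{-n-k}B)-\mu(a)\mu(B)\big|=\Big|\int N^n f\cdot\mathbf 1_B\,d\mu\Big|\leqslant\|N^n f\|_\infty\,\mu(B)\leqslant C_1\theta^n\|f\|_{\mathcal V}\,\mu(B)\leqslant C_0C_1\,\theta^n\,\mu(a)\mu(B),
\]
which is the claim with $K:=C_0C_1$.

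The main obstacle is ingredient (ii). If one does not simply cite it (e.g.\ from \cite{Aaronson1997, AaronsonDenker2001}), it must be proved from scratch: establish a Lasota--Yorke (Doeblin--Fortet) inequality $\|\hat T^n f\|_{\mathcal V}\leqslant A\rho^n\|f\|_{\mathcal V}+B_n\|f\|_1$ from condition (4) and the big image property, deduce quasi-compactness of $\hat T$ on $\mathcal V$ via the Ionescu-Tulcea--Marinescu/Hennion theorem, identify $1$ as a simple eigenvalue with eigenfunction $\mathbf 1$ using $T$-invariance of $\mu$, and exclude the rest of the peripheral spectrum using topological mixing; alternatively one can obtain the spectral gap directly by a Birkhoff-cone contraction argument. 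Ingredient (i) and the final computation are then routine.
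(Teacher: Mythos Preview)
Your proposal is correct and follows exactly the route the paper indicates (the paper does not give a detailed proof but remarks that Proposition~\ref{prop:cfmixing} ``can be proved with the following Renyi's property and transfer operator's spectral gap property''): your ingredient (i) is precisely Renyi's property (Proposition~\ref{prop:renyi}) together with its H\"older refinement, your ingredient (ii) is the spectral decomposition \eqref{eq:Lspecgap} of Proposition~\ref{prop:df}, and your final duality computation is the standard way to combine them. The only cosmetic point is that the paper sets up the spectral gap on $L=L^\infty_\beta$ with $\sigma(\beta)=\sigma(T\alpha)$ rather than on your $\mathcal V=L^\infty_\alpha$; since $T^k a\in\sigma(T\alpha)\subset\sigma(\alpha)$, your estimate for $\|\hat T^k\mathbf 1_a\|$ goes through verbatim on $L^\infty_\beta$, so this causes no difficulty.
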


For a $\psi$-mixing dynamical system whose $\sigma$-algebra is generated by a countable partition, one can hope for a Poisson limit theorem around a non-periodic point, as mentioned in the introduction. For example, the following Poisson limit theorem can be obtained from \cite[Corollary 1]{HaydnPsiloyenis2014} (see also \cite{HirataSaussolVaienti1999, HaydnVaienti2004}). Note that in there the authors also obtain error estimates.

\begin{theorem}\label{thm:poissonap}
Given a probability-preserving, topologically mixing Gibbs-Markov map. Suppose $x$ is a non-periodic point and $A_n(x)\in\alpha_0^{n-1}$ is the cylinder set of length $n$ that contains $x$. Then for any $t>0$ and $k\in\mathbb N$
\begin{equation*}
\lim_{n\rightarrow\infty}\mu\left(\left\{x\in\Omega: \sum_{i=0}^{[t/\mu(A_n(x)]-1}{\bf 1}_{A_n(x)}\circ T^i(x)=k\right\}\right)=\frac{t^k}{k!}e^{-t}.
\end{equation*}
\end{theorem}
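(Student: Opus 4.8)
The plan is to prove the convergence in distribution by the method of moments: I will show that every factorial moment of the count converges to the corresponding factorial moment $t^k$ of the Poisson$(t)$ law. Write $A=A_n(x)$, $p=p(n)=\mu(A)$, $N=N(n)=[t/p]$, and $S_n=\sum_{i=0}^{N-1}\mathbf 1_A\circ T^i$; then
\begin{equation*}
\mathbb E\big[S_n(S_n-1)\cdots(S_n-k+1)\big]=k!\sum_{0\le i_1<\cdots<i_k\le N-1}\mu\big(T^{-i_1}A\cap\cdots\cap T^{-i_k}A\big),
\end{equation*}
and the task is to show the right-hand side tends to $t^k$. Since $\mu(A_n(x))\to0$ (the point $x$, being non-periodic, is not an atom), I can fix a ``gap'' sequence $R=R(n)\to\infty$ with $R(n)p(n)\to0$ and also $N(n)\theta^{R(n)}\to0$, where $\theta<1$ is the mixing rate of Proposition \ref{prop:cfmixing}; note that the cylindrical length of $A_n(x)$ is $n$, which is $o(N)$ by the standard Gibbs--Markov exponential decay of cylinder measures, so a gap of size $R$ beyond this length is ample to invoke mixing.

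Call a tuple $i_1<\cdots<i_k$ \emph{separated} when $i_{l+1}-i_l\ge n+R$ for every $l$. For such a tuple, $\bigcap_l T^{-i_l}A$ is an intersection of blocks each lying in $\sigma(\alpha_0^{n-1})$ and mutually spaced by at least $R$; shifting by $i_1$, peeling off one block at a time, and applying Proposition \ref{prop:cfmixing} (with $a=A\in\alpha_0^{n-1}$ and the remaining intersection as $B$) gives $\mu(\bigcap_l T^{-i_l}A)=p^k(1+O(k\theta^{R}))$, uniformly over separated tuples. The number of separated tuples is $\binom Nk$ minus an error of size $O\big(kN^{k-1}(n+R)\big)=o(N^k)$, so the separated part of the factorial moment equals $k!\cdot\tfrac{N^k}{k!}\,p^k(1+o(1))=(Np)^k(1+o(1))\to t^k$.

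It remains to show that the \emph{clustered} tuples — those with at least one gap $i_{l+1}-i_l<n+R$ — contribute $o(1)$, and this is the main obstacle. Grouping the indices of a clustered tuple into maximal runs joined by short gaps (the runs then being separated by long gaps, across which mixing factorizes), one reduces everything to the short-return estimate
\begin{equation*}
\sum_{j=1}^{n+R}\mu\big(A_n(x)\cap T^{-j}A_n(x)\big)=o\big(p(n)\big).
\end{equation*}
For $n\le j\le n+R$ this is immediate from $\mu(A\cap T^{-j}A)\le(1+K)p^2$ (Proposition \ref{prop:cfmixing}) together with $Rp\to0$. For $1\le j<n$ one uses non-periodicity: $A_n(x)\cap T^{-j}A_n(x)\ne\emptyset$ forces the itinerary of $x$ to satisfy $\omega_i=\omega_{i+j}$ for all $0\le i\le n-1-j$, i.e. $x$ exhibits a near-period $j$ over a block of length $n-j$; when this holds, the intersection is a single cylinder strictly refining $A_n(x)$, of measure at most $(1+K)\,\mu(A_n(x))\cdot\sup_{b\in\alpha_0^{j-1}}\mu(b)$ by the Gibbs property. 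As $x$ is non-periodic, the smallest such near-period tends to infinity with $n$, so summing these contributions against the exponentially decaying quantities $\sup_{b\in\alpha_0^{j-1}}\mu(b)$ yields a bound $o(p(n))$. Feeding this back through the run-decomposition bounds the clustered part by $o((Np)^k)=o(1)$, completing the method-of-moments argument.

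An alternative route — closer in spirit to the transfer-operator method used for the paper's main theorem — is Hirata's perturbative approach. With $\mathcal L$ the transfer operator of $T$ and $M_{n,z}$ the operator of multiplication by $1-(1-z)\mathbf 1_{A_n(x)}$, one has $\mathbb E[z^{S_n}]=\int(\mathcal L\circ M_{n,z})^N\mathbf 1\,d\mu$, so it suffices to locate the leading eigenvalue $\lambda_{n,z}$ of $\mathcal L\circ M_{n,z}$ and show $\lambda_{n,z}^{N}\to e^{-(1-z)t}$, the probability generating function of Poisson$(t)$. The perturbation $\mathcal L\circ M_{n,z}-\mathcal L$ is \emph{not} small in the strong (Lipschitz) norm — it creates a jump of size $\asymp p(n)^{-1}$ at $\partial A_n(x)$ — but it is $O(p(n))$ in $L^1$, so the Keller--Liverani theorem (\cite{KellerLiverani1999}) still applies and gives $\lambda_{n,z}=1-(1-z)p(n)(1+o(1))$ with a persistent spectral gap; the factor $(1+o(1))$ here conceals exactly the same short-return sum, which is $o(1)$ precisely because $x$ is non-periodic. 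Raising to the power $N=[t/p(n)]$ then gives the claim. (For $x$ periodic, or for sequences of bounded cylindrical length, this correction fails to vanish, which is the phenomenon the paper's main result addresses.)
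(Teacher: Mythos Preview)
The paper does not prove Theorem~\ref{thm:poissonap}. It is stated there as a known result ``obtained from \cite[Corollary~1]{HaydnPsiloyenis2014} (see also \cite{HirataSaussolVaienti1999, HaydnVaienti2004})'', with no argument supplied; so there is no in-paper proof to compare against. Your factorial-moment approach is in the spirit of those references and is essentially sound. One point you pass over quickly is that $\sup_{b\in\alpha_0^{j-1}}\mu(b)$ is summable in $j$; this is true for a probability-preserving mixing Gibbs--Markov system (Renyi's property gives $q_{k+l}\le Mq_kq_l$ for $q_k:=\sup_{b\in\alpha_0^{k-1}}\mu(b)$, and non-atomicity forces $q_k\to0$, hence eventually $Mq_{k_0}<1$ and geometric decay follows), but it deserves a sentence.

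Your alternative route via Keller--Liverani, however, has a genuine gap. Theorem~\ref{thm:weakperturb} needs a \emph{uniform} Doeblin--Fortet inequality (condition~(ii)), and in this paper that comes from Proposition~\ref{prop:df} with constant $C_1+C_2D_\alpha\omega$. For $\omega=e^{-s\mathbf 1_{A_n(x)}}$ with $A_n(x)\in\alpha_0^{n-1}$ one has $D_\alpha\mathbf 1_{A_n(x)}\asymp r^{-n}\to\infty$, so the constant in (ii) is not uniform in $n$ and the hypotheses of Theorem~\ref{thm:weakperturb} fail on the space $(L,\|\cdot\|)$. This is exactly why the paper's own spectral argument (Theorem~\ref{thm:main}) imposes assumption~(a), a \emph{bounded} cylindrical length $m$, so that $D_\alpha\mathbf 1_{A_n}\le r^{-m}$ uniformly; and why Theorem~\ref{thm:poissonap} is imported from references using probabilistic rather than spectral methods. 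Your remark that the perturbation is small in $L^1$ is correct but insufficient: smallness in the weak norm gives (iii), not (ii).
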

However, around a periodic point or when $A_n$ always intersects with its preimages via small iterations, one can see that the error estimates in \cite{HaydnPsiloyenis2014, HaydnYang2016} do not imply a Poisson limit law. Indeed, when $x$ is a periodic point and $A_n=A_n(x)\in\alpha_0^{n-1}$, the limit distribution is compound Poisson as shown in \cite{HaydnVaienti2009}. We note that similar results hold for the induced measure $\mu_{n}=\frac{\mu|_{A_n}}{\mu(A_n)}$ through the relation revealed in \cite{Zweimueller2016} between the statistics of successive return times and the statistics of successive hitting times. 

Proposition \ref{prop:cfmixing} can be proved with the following Renyi's property and transfer operator's spectral gap property (to be discussed in the next section).

\begin{proposition}{\rm (Renyi's property, \cite[Proposition 1.2]{AaronsonDenker2001})}\label{prop:renyi} Given a topologically mixing Gibbs-Markov map. There exists a constant $M>0$ such that for every $n\in\mathbb N, a\in\alpha_0^{n-1}$, a.e. $x\in T^n a$,
\begin{equation*}
M^{-1}\mu(a)\leqslant v'_a(x)\leqslant M\mu(a).
\end{equation*}
\end{proposition}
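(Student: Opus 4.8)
The plan is to read off the estimate from three ingredients: the bounded distortion in condition~(4) of Definition~\ref{def:gibbsmarkov}, the big image property~(2), and the identity
\[
\mu(a)=\mu\bigl(v_a(T^na)\bigr)=\int_{T^na}v'_a\,d\mu ,
\]
which holds because $v_a\colon T^na\to a$ is the inverse branch of $T^n$, a bijection $\bmod\mu$ whose Jacobian with respect to $\mu$ is by definition $v'_a$. So it suffices to compare the integrand $v'_a$ with its average over $T^na$, and to keep $\mu(T^na)$ bounded away from $0$.

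First I would record a uniform distortion bound. Fix $n$, $a\in\alpha_0^{n-1}$ and a.e.\ $x,y\in T^na$. Since $x$ and $y$ lie in $T^na$ their separation time is at least $1$, so $r(x,y)\leqslant r$, and condition~(4) gives $|v'_a(x)/v'_a(y)-1|\leqslant Mr$; interchanging $x$ and $y$ yields the two-sided bound $(1+Mr)^{-1}\leqslant v'_a(x)/v'_a(y)\leqslant 1+Mr$. Writing $D:=1+Mr$, this says $\operatorname{ess\,sup}_{T^na}v'_a\leqslant D\operatorname{ess\,inf}_{T^na}v'_a$, uniformly in $n$ and $a$ (note it survives even when $Mr\geqslant1$, thanks to the symmetry in $x,y$). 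Next I would bound $\mu(T^na)$ from below: by the Markov property~(3) one checks by induction on $n$ that $T^{n-1}a$ is, $\bmod\mu$, a single atom $b\in\alpha$ (the last coordinate of $a$), hence $T^na=Tb$ and $\mu(T^na)\geqslant\delta:=\inf_{b\in\alpha}\mu(Tb)>0$ by condition~(2), while clearly $\mu(T^na)\leqslant1$.

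Finally I would assemble the pieces. Integrating the distortion bound gives $\delta\operatorname{ess\,inf}_{T^na}v'_a\leqslant\mu(a)\leqslant\operatorname{ess\,sup}_{T^na}v'_a$, and combining this with $\operatorname{ess\,sup}_{T^na}v'_a\leqslant D\operatorname{ess\,inf}_{T^na}v'_a$ yields, for a.e.\ $x\in T^na$,
\[
D^{-1}\mu(a)\ \leqslant\ v'_a(x)\ \leqslant\ D\delta^{-1}\mu(a),
\]
which is the assertion with $M=D/\delta$ (and $\delta\leqslant1$, so $M\geqslant D\geqslant1$). The routine parts are the Jacobian identity and the inductive description of $T^{n-1}a$; the only genuinely essential inputs are the big image property, which keeps the denominators $\mu(T^na)$ away from $0$, and the distortion inequality~(4) — there is no obstacle of substance here, and topological mixing, although assumed in the statement, is not used in this argument.
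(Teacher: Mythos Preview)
Your argument is correct and is essentially the standard proof of Renyi's property. Note that the paper does not actually prove this proposition: it is quoted from \cite[Proposition~1.2]{AaronsonDenker2001} and used without proof, so there is no in-paper argument to compare against. Your three ingredients---the Jacobian identity $\mu(a)=\int_{T^na}v'_a\,d\mu$, the uniform distortion bound coming from condition~(4), and the lower bound on $\mu(T^na)$ via the big image property---are exactly the ones underlying the cited result.

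Two minor remarks. First, your sentence ``since $x$ and $y$ lie in $T^na$ their separation time is at least $1$'' is slightly misleading: with the metric as defined in the paper one has $r(x,y)\leqslant r$ for \emph{all} distinct $x,y\in\Omega$, so membership in $T^na$ plays no role there; the distortion bound $|v'_a(x)/v'_a(y)-1|\leqslant Mr$ is valid simply because the metric is globally bounded by $r$. Second, your inductive identification $T^na=Tb$ with $b\in\alpha$ the last coordinate of $a$ deserves one line of justification: writing $a=a_0\cap T^{-1}c$ with $c\in\alpha_0^{n-2}$, one has $Ta=Ta_0\cap c$, and since $Ta_0\in\sigma(\alpha)$ and $c\subset a_1$ with $\mu(a)>0$, necessarily $a_1\subset Ta_0$, whence $Ta=c$; the claim then follows by induction. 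Your closing observation that topological mixing is not used is accurate.
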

A simple corollary is useful to estimate short returns.
\begin{corollary}\label{lem:relidp}Given a probability-preserving, topologically mixing Gibbs-Markov map. There exists a constant $M_1>0$ such that for every $n\in\mathbb N, a\in\alpha_0^{n-1}, k\leqslant n$, 
$$\mu(a\cap T^{-k}a)\leqslant M_1\mu(a)^{1+\frac{1}{1+n}}.$$ 
\end{corollary}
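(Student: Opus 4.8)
The plan is to reduce the estimate to a bound on the measure of a short ``head'' cylinder, exploiting that the hypothesis $a\cap T^{-k}a\neq\emptyset$ forces the symbolic name of $a$ to be periodic. So first I would fix $a=[a_0,\dots,a_{n-1}]\in\alpha_0^{n-1}$ and $k\in\{1,\dots,n\}$ and observe that $a\cap T^{-k}a$ consists of the points $x$ whose coordinates $0,\dots,n-1$ spell $a_0\cdots a_{n-1}$ and whose coordinates $k,\dots,k+n-1$ spell the same word. If these constraints conflict the set is empty and there is nothing to prove; otherwise $a_i=a_{i-k}$ for $k\le i\le n-1$, hence $a_i=a_{i\bmod k}$ for all $i$, so $a_0\cdots a_{n-1}$ is the length-$n$ prefix of the word of period $(a_0,\dots,a_{k-1})$. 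Writing $d:=[a_0,\dots,a_{k-1}]\in\alpha_0^{k-1}$ and $n=qk+s$ with $0\le s<k$, $q\ge1$, this means: (i) $a\cap T^{-k}a$ is a single cylinder of length $n+k$ whose name is the name of $d$ followed by the name of $a$; and (ii) the cylinder $a$ is the legal concatenation of $q$ copies of $d$ followed by $d_s:=[a_0,\dots,a_{s-1}]$ (with $d_s=\Omega$ when $s=0$), and every partial concatenation of these copies, being a prefix of $a$, is again legal.

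The workhorse is quasi-multiplicativity of $\mu$ along concatenations. If $uw$ is a legal concatenation of $u\in\alpha_0^{p-1}$ and $w\in\alpha_0^{\ell-1}$, then $v_{uw}=v_u\circ v_w$ on $T^{p+\ell}(uw)=T^\ell w$, so $v_{uw}'(x)=v_u'(v_w(x))\,v_w'(x)$; feeding this into Renyi's property (Proposition~\ref{prop:renyi}) together with the big image property yields
\[
M^{-2}c_0\,\mu(u)\mu(w)\ \le\ \mu(uw)\ \le\ M^{2}\,\mu(u)\mu(w),
\]
with $M$ the Renyi constant and $c_0:=\inf_{a\in\alpha}\mu(Ta)>0$. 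The upper bound applied to (i) gives $\mu(a\cap T^{-k}a)\le M^{2}\mu(d)\mu(a)$, so it remains to prove $\mu(d)\le C\,\mu(a)^{1/(n+1)}$ for a uniform $C$. Iterating the lower bound along the partial concatenations in (ii) and using $\mu(d_s)\ge\mu(d)$ (since $d\subseteq d_s$) gives $\mu(a)\ge (M^{-2}c_0)^{q}\mu(d)^{q+1}$; as $0<M^{-2}c_0\le1$ this implies $\mu(d)\le (M^{-2}c_0)^{-q/(q+1)}\mu(a)^{1/(q+1)}\le (M^{-2}c_0)^{-1}\mu(a)^{1/(q+1)}$, and since $q\le n$ and $\mu(a)\le1$ we have $\mu(a)^{1/(q+1)}\le\mu(a)^{1/(n+1)}$. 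Combining, $\mu(a\cap T^{-k}a)\le M^{4}c_0^{-1}\mu(a)^{1+\frac1{n+1}}$, so $M_1=M^4/c_0$ works.

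I expect the only delicate point to be keeping the constant uniform when the lower bound is iterated $q$ times: done carelessly this produces a factor $(M^{-2}c_0)^{-q}$ that blows up with $q$, but since it multiplies $\mu(d)^{q+1}$ with $\mu(d)\le1$, taking the $(q+1)$-st root collapses it to $(M^{-2}c_0)^{-q/(q+1)}\le (M^{-2}c_0)^{-1}$; it is this cancellation, rather than any hard estimate, that makes the exponent $1+\frac1{n+1}$ come out (and shows it is sharp, attained for $k=1$). The rest is bookkeeping with periodic symbolic names and the chain rule for inverse branches.
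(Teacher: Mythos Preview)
Your proof is correct and follows the same strategy as the paper's: both recognise that $a\cap T^{-k}a\neq\emptyset$ forces the name of $a$ to be periodic with period $k$, write $a$ as a concatenation of $q=[n/k]$ copies of the head cylinder $d$ (your notation; $b$ in the paper) plus a tail, and then use Renyi's property as a quasi-multiplicativity estimate to compare $\mu(a)$ with $\mu(d)^q$ and $\mu(a\cap T^{-k}a)$ with $\mu(d)\mu(a)$. The only differences are bookkeeping: the paper eliminates the tail piece via an extra inequality $\mu(a\cap T^{-k}a)\le M\mu(a)\mu(T^{n-r}a)$ and an algebraic combination of three bounds, whereas you dispose of the tail directly by $\mu(d_s)\ge\mu(d)$; and the paper obtains the slightly sharper constant $M_1=M^2$ because the change-of-variables form of quasi-multiplicativity, $\mu(uw)=\int_{w}v'_u\,d\mu\ge M^{-1}\mu(u)\mu(w)$, avoids the factor $c_0$ you pick up by integrating over $T^{p+\ell}(uw)$.
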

\begin{proof}
Let $r=n-[n/k]k$. Let $b\in\alpha_0^{k-1}$ be the cylinder set of length $k$ that contains $a$ as a subset. 
Note that if $a\cap T^{-k}a\neq\emptyset$,
\begin{gather*}
a=b\cap T^{-k} b\cap\cdots\cap T^{-([n/k]-1)k}b\cap T^{-[n/k]k}T^{n-r}a,\\
a\cap T^{-k}a=b\cap T^{-k}a,\\
a\cap T^{-k}a\subset a\cap T^{-(n+k-r)}T^{n-r}a=a\cap T^{-n}(T^{-(k-r)}T^{n-r}a).
\end{gather*}
Then it follows from Renyi's property that there exists a constant $M$ such that
\begin{gather*}
\mu(a)\geqslant M^{-[n/k]}\mu(b)^{[n/k]}\mu(T^{n-r}a),\\
\mu(a\cap T^{-k}a)\leqslant M \mu(b)\mu(a),\\
\mu(a\cap T^{-k}a)\leqslant M \mu(a)\mu(T^{n-r}a).
\end{gather*}
Raise the second inequality to $[n/k]$-th power, multiply with the third inequality, then substitute the first inequality, one gets
\begin{align*}
\mu(a\cap T^{-k}a)^{[n/k]+1}&\leqslant M^{[n/k]+1}\mu(a)^{[n/k]+1}\mu(b)^{[n/k]}\mu(T^{n-r}a)\\
&\leqslant M^{2[n/k]+1}\mu(a)^{[n/k]+2}.
\end{align*}
Therefore
$$\mu(a\cap T^{-k}a)\leqslant M^2 \mu(a)^\frac{2+[n/k]}{1+[n/k]}\leqslant M^2 \mu(a)^{1+\frac{1}{1+n}}.$$
\end{proof}

\section{Transfer operator and perturbation}\label{sec:transfer}
From now on we always assume that $(\Omega, \mathcal B, \mu, T, \alpha)$ is a probability-preserving, topologically mixing Gibbs-Markov system. In this section we summarize some properties of the widely-used transfer operator.
For any partition $\rho$ of $\Omega$, define the H\"older norm subject to $\rho$ of a function $f:\Omega\to \mathbb R$ by
\begin{equation*}
D_\rho f:=\sup\limits_{b\in\rho}\sup\limits_{x, y\in b, x\neq y}\frac{|f(x)-f(y)|}{r(x,y)},
\end{equation*}
where  $\sup$ is taken $\mu$ almost everywhere. 
Denote the usual $L^q$-norm by $\|\cdot\|_q$, $1\leqslant q\leqslant\infty$. Set
\begin{equation*}
\|f\|_{\infty,\rho}:=\|f\|_\infty+D_{\rho} f.
\end{equation*}
 Denote by $L^\infty_\rho$ the set consisting of functions of finite $\|\cdot\|_{\infty,\rho}$-norm. $(L^\infty_\rho, \|\cdot\|_{\infty, \rho})$ is a Banach space.
Recall that $T\alpha\subset \sigma(\alpha)$. For every $n\in\mathbb N$, $T^n(\alpha_0^{n-1})=T\alpha$. Fix a partition $\beta$ such that 
$\sigma(T\alpha)=\sigma(\beta).$
Define the transfer operator $\mathcal{L}:L^1(\mu)\rightarrow L^1(\mu)$ by
 $$  \mathcal{L}(f):=\sum_{b\in\beta}{\mathbf 1}_b\sum_{a\in\alpha,Ta\supset b}v'_a\cdot f\circ v_a.$$
Since $\mu$ is $T$-invariant,
$\mathcal{L} ({\mathbf 1})={\mathbf 1}.$
 $\mathcal L$ satisfies and is uniquely characterized by:
  \begin{equation*}\label{eq:transfer}
   \int_\Omega \mathcal{L}(f)\cdot gd\mu=\int_\Omega f\cdot g\circ Td\mu, \qquad \forall f\in L^1(\mu), g\in L^\infty(\mu).
  \end{equation*}
 We write for simplicity
$$L:=L_\beta^\infty, \qquad \|\cdot\|:=\|\cdot\|_{\infty,\beta}.$$ As no confusion should appear, we use the same notation $\|\cdot\|$ for the operator norm on $L$. On $L$, the transfer operator
$\mathcal L$ satisfies Doeblin-Fortet inequality and spectral gap property as stated in the next proposition.
Denote by $\mathfrak r(\cdot)$ the spectral radius of a linear operator. Let $r\in (0,1)$ be the constant in Definition \ref{def:gibbsmarkov} (4). 
\begin{proposition}\cite[Propositions 1.4, 2.1, Theorem 1.6]{AaronsonDenker2001}\label{prop:df} 
There are constants $C_1, C_2>0$ such that the following statements are true.
For all $f\in L$ and $n\in\mathbb N$,
\begin{equation*}
||\mathcal{L}^n(f)||\leqslant C_1 (r^n D_\beta f+||f||_1).
\end{equation*}
Suppose $\omega:\Omega\rightarrow [0,1]$ satisfies $D_\alpha\omega<\infty$. Let $\mathcal{L}_\omega (f):=\mathcal{L}(\omega\cdot f)$ then 
\begin{equation*}%\label{ieq:dfw}
||\mathcal{L}^n_\omega (f)||\leqslant  (C_1+ C_2 D_\alpha\omega)( r^n D_\beta f+||f||_1).
\end{equation*}Furthermore the essential spectral radius $\mathfrak r_{\rm ess}(\mathcal L)\leqslant r$ and $1$ is the simple, unique maximal eigenvalue of $\mathcal L$.
 One can decompose $\mathcal L$ on L as
 \begin{equation}\label{eq:Lspecgap}
 \mathcal{L}=P+N,
 \end{equation}
where $P(f)=\int_\Omega fd\mu$ is the eigenprojection of $\mathcal L$ with respect to $1$, $P N=N P=0$ and $\mathfrak r(N)<1$.
\end{proposition}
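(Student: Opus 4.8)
The plan is to establish the Doeblin--Fortet inequalities first and then deduce quasicompactness by the Ionescu-Tulcea--Marinescu/Hennion method, as in \cite{AaronsonDenker2001}. Expanding $\mathcal L_\omega^n$ through the kernel of $\mathcal L$ gives
\[
\mathcal L_\omega^n f(x)=\sum_{a\in\alpha_0^{n-1},\ T^n a\ni x}v'_a(x)\,\omega_n(v_ax)\,f(v_ax),\qquad \omega_n:=\prod_{j=0}^{n-1}\omega\circ T^j,
\]
(with $\omega\equiv1$ recovering $\mathcal L^n$), using the chain rule for composed inverse branches and the identity $\omega_n\circ v_a=\prod_{j}\omega\circ v_{T^j a}$ for the weights. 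For the unweighted bound I would split $f(v_ax)=\mu(a)^{-1}\int_a f\,d\mu+\bigl(f(v_ax)-\mu(a)^{-1}\int_a f\,d\mu\bigr)$: Renyi's property (Proposition \ref{prop:renyi}) gives $v'_a(x)\le M\mu(a)$, so the average part contributes at most $M\|f\|_1$, while the remainder is at most the oscillation of $f$ on $a$, which is $\le r^n D_\beta f$ since any two points $x,y$ of $a\in\alpha_0^{n-1}$ lie in one element of $\beta$ (as $\sigma(\beta)=\sigma(T\alpha)\subseteq\sigma(\alpha)$) and satisfy $r(x,y)\le r^n$; summing against $\sum_a v'_a(x)=\mathcal L^n\mathbf1(x)=1$ gives the $r^n D_\beta f$ term. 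This yields $\|\mathcal L^n f\|_\infty\le M\|f\|_1+r^n D_\beta f$.

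For $D_\beta(\mathcal L^n f)$ I would take $x,x'$ in a common element of $\beta$ — then both lie in each $T^n a$ appearing in the sum — and telescope each summand into $v'_a(x)[f(v_ax)-f(v_ax')]$ plus $[v'_a(x)-v'_a(x')]f(v_ax')$: the first is $\le v'_a(x)D_\beta f\cdot r^n r(x,x')$ because inverse branches of $T^n$ contract $r$-distances by $r^n$, and the second uses the distortion bound of Definition \ref{def:gibbsmarkov}(4), $|v'_a(x)-v'_a(x')|\le Mr(x,x')v'_a(x')$, after which $f(v_ax')$ is treated as above. Summing gives the first displayed inequality. The weighted case is identical except for one extra telescoping term $v'_a(x)[\omega_n(v_ax)-\omega_n(v_ax')]f(v_ax')$; writing $\omega_n$ as a product of $[0,1]$-valued factors $\omega\circ v_{T^j a}$ and using $|\prod u_j-\prod w_j|\le\sum_j|u_j-w_j|$, the $j$-th factor contributes $D_\alpha\omega\cdot r(T^j v_ax,T^j v_ax')\le D_\alpha\omega\cdot r^{\,n-j}r(x,x')$. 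The crucial observation is that these sum to $D_\alpha\omega\,r(x,x')\sum_{k=1}^n r^k\le\tfrac{r}{1-r}D_\alpha\omega\,r(x,x')$, a \emph{convergent geometric series independent of $n$}; together with $\omega_n\le1$ (which leaves the $\|\cdot\|_\infty$ and $\|\cdot\|_1$ parts unaffected) this is exactly what produces the factor $C_1+C_2 D_\alpha\omega$ rather than one growing with $n$.

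Granting these inequalities, since $D_\beta f\le\|f\|$ one has $\|\mathcal L^n f\|\le C_1 r^n\|f\|+C_1\|f\|_1$, and I would apply Hennion's quasicompactness theorem once the $\|\cdot\|$-unit ball is shown relatively compact in $L^1(\mu)$. For the latter I would use conditional expectations: if $\|f_k\|\le1$, then $g_k^{(m)}:=\mathbb E[f_k\mid\sigma(\alpha_0^{m-1})]$ satisfies $\|f_k-g_k^{(m)}\|_\infty\le r^m$ uniformly in $k$, while each $g_k^{(m)}$ is constant on level-$m$ cylinders with values in $[-1,1]$, hence lies in the $L^1(\mu)$-compact image of a product of intervals under $v\mapsto\sum_c v_c\mathbf 1_c$; a total-boundedness argument then shows $\{f_k\}$ is totally bounded, hence relatively compact, in $L^1(\mu)$. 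This gives $\mathfrak r_{\mathrm{ess}}(\mathcal L)\le\limsup_n(C_1 r^n)^{1/n}=r$, so $\mathcal L$ is quasicompact; since $\mathcal L\mathbf 1=\mathbf1$ and $\sup_n\|\mathcal L^n\|<\infty$, the value $1$ is a semisimple eigenvalue of maximal modulus.

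It remains to pin down the peripheral spectrum, and here I would use positivity of $\mathcal L$ with topological mixing. If $\mathcal Lh=\lambda h$ with $|\lambda|=1$, then $|h|\le\mathcal L|h|$ while $\int(\mathcal L|h|-|h|)\,d\mu=0$ (because $\mathcal L^*\mu=\mu$), so $\mathcal L|h|=|h|$; then $|h|\,d\mu$ is a $T$-invariant measure absolutely continuous with respect to $\mu$, and ergodicity of $\mu$ (standard for topologically mixing Gibbs--Markov maps) forces $|h|$ to be a.e.\ constant, whence $1$ is simple; writing $h=e^{i\phi}$, the identity $\lambda e^{i\phi(x)}=\sum_a v'_a(x)e^{i\phi(v_ax)}$ expresses a unit vector as a convex combination of unit vectors, forcing $e^{i\phi}\circ T=\lambda^{-1}e^{i\phi}$, and weak mixing rules out $\lambda\neq1$. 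Hence the rest of the spectrum lies in $\{|z|<1\}$; letting $P$ be the spectral projection at $1$ (with range $\mathbb C\mathbf1$), the relations $\mathcal L^n f=Pf+N^nf$ with $N^nf\to0$ and $\int\mathcal L^n f\,d\mu=\int f\,d\mu$ give $Pf=(\int_\Omega f\,d\mu)\mathbf1$, and $N:=\mathcal L-P$ satisfies $PN=NP=0$, $\mathfrak r(N)<1$. I expect the main obstacles to be the bookkeeping of contraction rates in the weighted estimate that makes the $D_\alpha\omega$-contribution sum to a geometric series independent of $n$, and the verification of conditional compactness of $L\hookrightarrow L^1(\mu)$ despite $\alpha$ being only countable; the peripheral-spectrum step is routine but genuinely uses topological mixing.
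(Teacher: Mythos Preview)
The paper does not give a proof of this proposition; it is quoted directly from \cite{AaronsonDenker2001} (Propositions~1.4, 2.1 and Theorem~1.6 there), so there is no in-paper argument to compare against. Your sketch is a faithful and correct reconstruction of the Aaronson--Denker proof: the Doeblin--Fortet inequality via the mean-plus-oscillation splitting together with Renyi's bound and the distortion estimate, the key geometric-series control of the weighted term (which is exactly what yields the $n$-independent constant $C_1+C_2D_\alpha\omega$), Hennion/Ionescu-Tulcea--Marinescu quasicompactness using precompactness of the $\|\cdot\|$-unit ball in $L^1(\mu)$, and the standard positivity/mixing argument for the peripheral spectrum.
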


Following \cite{Hirata1993}, one can investigate the statistics of return times by studying the operator  ${\mathcal{L}}_{A}(f):=\mathcal{L}({\mathbf 1_{A^c}}\cdot f)$ as a perturbation of $\mathcal L$. However in our situation when $A_n$ has a constant cylindrical length, this perturbation is not asymptotically small under $\|\cdot\|$-norm. Take the continued fraction map for example. Recall $\alpha=\{\{a_1=n\}:n\in\mathbb N\}$. The Gauss map sends every element of  $\alpha$ onto the whole interval $(0,1)$, so let $\beta=\{(0,1)\}$. Let $A_n=(0, 1/n)\in\sigma(\alpha)$. Then $\|{\mathcal{L}}_{A_n}-\mathcal L\|=\|\mathcal L_{A_n^c}\|$ is of the order of $D_\beta \mathbf 1_{A_n}=r^{-1}$ for all $n$, not tending to $0$. Usual analytic perturbation theorems are not applicable. Instead as mentioned in the introduction, we use Keller and Liverani's perturbation theorem to deal with such perturbations that are asymptotically small under a weaker norm. We restate Keller and Liverani's theorems from \cite{KellerLiverani1999} as follows.

\begin{theorem}\label{thm:weakperturb}
Let $\{\mathcal L_n\}_{n\in\mathbb N_0}$ be a sequence of bounded linear operators on a Banach space $(B, \|\cdot\|)$, with a second (semi-)norm $\frak n(\cdot)\leqslant\|\cdot\|$. Let $0<\rho<1$ and let $\tau:\mathbb N_0\to \mathbb R^+$ be a monotone function with $\lim_{n\to\infty}\tau_n=0$. Suppose that for some constant $C>0$, and for all $n, k\in \mathbb N_0$ and $f\in B$,
\begin{enumerate}[(i)]
\item $\frak n(\mathcal L_n^k)\leqslant C$,
\item $\|\mathcal L_n^k (f)\|\leqslant C (\rho^k\|f\|+ \frak n(f))$,
\item $\frak n((\mathcal L_0-\mathcal L_n)(f))\leqslant \tau_n\|f\|$.
\end{enumerate}
Suppose further that $\mathfrak r_{\rm ess}(\mathcal L_0)\leqslant \rho$ and $\lambda_0$ is the simple, unique maximal eigenvalue of $\mathcal L_0$ with $|\lambda_0|=1$. Decompose $\mathcal L_0=\lambda_0P_0+N_0$ in the form of \eqref{eq:Lspecgap}.
%$\sigma(\mathcal L_0)\cap \{z\in\mathbb C: |z|\geqslant r'\}=\{\lambda_0\}$ for and $|\lambda_0|=1$. 
Let $\rho'\in(\rho,1)$ and $\delta>0$ be small such that $\{z\in\mathbb C:|z-\lambda_0|\leqslant\delta\}$ is disjoint from $\{z\in\mathbb C: |z|\leqslant \rho'\}$. Then there exist $n_0\in\mathbb N, \eta\in (0,1), K>0$ such that for every $n\geqslant n_0$, $\mathfrak r_{\rm ess}(\mathcal L_n)\leqslant \rho'$, $\mathcal L_n$ has a simple, unique maximal eigenvalue $\lambda_n$ and one can decompose $\mathcal L_n=\lambda_n P_n+ N_n$ in the form of \eqref{eq:Lspecgap} satisfying additionally
\begin{enumerate}
\item $|\lambda_n-\lambda_0|\leqslant\delta$,
\item $\frak n((P_n-P_0)(f))\leqslant K\tau^\eta_n\|f\|$ for any $f\in B$,
\item $\|P_n (f)\|\leqslant K\frak n(P_n (f))$ for any $f\in B$,
\item $\|N_n^k\|\leqslant K (1-\delta)^k$ for any $k\in\mathbb N$.
\end{enumerate}
\end{theorem}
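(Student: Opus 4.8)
The plan is to obtain the statement as a specialization of the abstract perturbation theory of Keller and Liverani \cite{KellerLiverani1999}: hypotheses (i)--(iii) are precisely their standing assumptions (a uniform a priori bound, a uniform Lasota--Yorke/Doeblin--Fortet inequality, and smallness of the perturbation in the weak seminorm), and the four conclusions should be read off from their resolvent estimates. Concretely, the crucial input I would quote is the following. From the second resolvent identity
$$(z-\mathcal L_0)^{-1}-(z-\mathcal L_n)^{-1}=(z-\mathcal L_0)^{-1}(\mathcal L_0-\mathcal L_n)(z-\mathcal L_n)^{-1},$$
using (ii) to bound $\|(z-\mathcal L_n)^{-1}\|$ uniformly for $|z|\geqslant\rho'$ and $n$ large, and (iii) to bound the difference in the seminorm $\frak n$, one gets that $z\mapsto(z-\mathcal L_n)^{-1}$ converges, uniformly on compact subsets of $\{z:|z|>\rho'\}\setminus\mathrm{spec}(\mathcal L_0)$, to $(z-\mathcal L_0)^{-1}$ in the operator norm from $(B,\|\cdot\|)$ to $(B,\frak n)$, with rate $\tau_n^{\eta}$ for some $\eta\in(0,1)$ depending only on $\rho$ and $\rho'$. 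This is the heart of \cite{KellerLiverani1999}; it already yields $\mathfrak r_{\rm ess}(\mathcal L_n)\leqslant\rho'$ for $n\geqslant n_0$ and the stability of the part of the spectrum outside $\{z:|z|\leqslant\rho'\}$, which for $n$ large consists of a single simple eigenvalue $\lambda_n$ together with its rank-one eigenprojection $P_n$.

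With this in hand I would proceed as follows. Fix the circle $\gamma=\{z:|z-\lambda_0|=\delta\}$; by the choice of $\delta$ it encircles $\lambda_0$ and no other point of $\mathrm{spec}(\mathcal L_0)$, and it lies in the resolvent set of $\mathcal L_n$ for $n$ large, so $P_n=\frac1{2\pi i}\oint_\gamma(z-\mathcal L_n)^{-1}\,dz$. Integrating the resolvent estimate over $\gamma$ gives conclusion (2). Since $P_0$ is rank one and $P_n-P_0$ is small in $\frak n$, $P_n$ is rank one for $n$ large, so $\mathcal L_n$ acts on $\mathrm{Ran}(P_n)$ as a scalar $\lambda_n$ and continuity forces $|\lambda_n-\lambda_0|\leqslant\delta$, which is conclusion (1). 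For conclusion (3) I would exploit the rank-one structure: writing $P_n(f)=\ell_n(f)\,h_n$ with $h_n$ the normalized eigenfunction and $\ell_n$ the dual eigenfunctional, the ratio $\|P_n(f)\|/\frak n(P_n(f))$ equals the constant $\|h_n\|/\frak n(h_n)$, and this is bounded because $\|h_n\|\leqslant C$ by (ii) applied to $h_n=\lambda_n^{-k}\mathcal L_n^k h_n$, while $\frak n(h_n)$ stays bounded away from $0$ since $h_n\to h_0$ in $\frak n$ by conclusion (2) and $\frak n(h_0)\neq0$. Finally, for conclusion (4), on $\mathrm{Ran}(I-P_n)$ the spectrum of $\mathcal L_n$ lies in $\{z:|z|\leqslant\rho'\}$ uniformly in $n$, so $\mathfrak r(N_n)\leqslant\rho'<1-\delta$; combining this with (ii) applied to $(I-P_n)f$ (noting $\frak n((I-P_n)f)\leqslant C\|f\|$ from conclusion (3) and the uniform bound on $P_n$) and iterating in the standard way yields $\|N_n^k\|\leqslant K(1-\delta)^k$.

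I expect the only real obstacle to be the resolvent estimate in the first paragraph, which is exactly where the difficulty of \cite{KellerLiverani1999} lies: hypothesis (iii) controls $\mathcal L_0-\mathcal L_n$ only in the weak seminorm, while $(z-\mathcal L_0)^{-1}$ is only bounded in the strong norm, and reconciling the two — squeezing the H\"older exponent $\eta$ out of an interpolation between a telescoped form of (ii) and the weak smallness (iii) — requires their iteration scheme. Since we are entitled to quote that result, everything else is bookkeeping; the only mildly delicate points are the three rank-one reductions above, all of which hinge on $\lambda_0$ being simple and on $\frak n$ being nondegenerate on the one-dimensional limiting eigenspace $\mathbb R\cdot h_0$.
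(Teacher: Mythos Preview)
The paper does not prove this theorem at all: it is introduced with ``We restate Keller and Liverani's theorems from \cite{KellerLiverani1999} as follows'' and then simply quoted without argument. Your proposal is therefore not competing with any proof in the paper; it is a (reasonable) sketch of how the Keller--Liverani machinery yields the stated conclusions, which is more than the paper itself provides.
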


\section{Main theorem and applications to continued fractions}
Now we prove the main theorem in this note. We consider a sequence of shrinking sets with bounded lengths. If the sequence converges to a point, it may be a periodic point or a compactification point. To some extent, our result complements both the Poisson limit theorem \ref{thm:poissonap} for a sequence of cylinder sets with increasing lengths converging to a non-periodic point and the compound Poisson limit theorem for a sequence of cylinder sets with increasing lengths converging to a periodic point.
\begin{theorem} \label{thm:main}
Given a measure-preserving, topologically mixing Gibbs-Markov map. Assume that a sequence of measurable sets $\{A_n\}_{n\in\mathbb N}$ satisfies the following conditions.
\begin{enumerate}[(a)]
\item For some $m\in\mathbb N$, $A_n\in\sigma(\alpha_0^{m-1})$ for all $n\in\mathbb N$.
\item For all $i\in\mathbb N$, $\displaystyle\lim_{n\to\infty}\mu(A_n\cap T^{-i}A_n)/\mu(A_n)=0.$
\end{enumerate}
If  $n\mu(A_n)$ converges to a constant $t>0$ as $n\to\infty$, then for every $k\in\mathbb N_0$ \begin{equation*}
\lim_{n\rightarrow\infty}\mu\left(\left\{x\in\Omega: \sum_{i=0}^{n-1}{\bf 1}_{A_n}\circ T^i(x)=k\right\}\right)=\frac{t^k}{k!}e^{-t}.
\end{equation*}
\end{theorem}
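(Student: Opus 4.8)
The plan is to follow Hirata's approach via perturbation of the transfer operator, but using Keller--Liverani's theorem (Theorem \ref{thm:weakperturb}) to accommodate the fact that the perturbation is not small in the strong norm $\|\cdot\|$. Set $\mathcal L_0=\mathcal L$ and $\mathcal L_n(f):=\mathcal L(\mathbf 1_{A_n^c}\cdot f)=\mathcal L(f)-\mathcal L(\mathbf 1_{A_n}\cdot f)$, with the weak seminorm $\mathfrak n(\cdot)=\|\cdot\|_1$. The combinatorial identity linking the perturbed operator to the counting statistics is standard: for a cylinder-like set $A$, iterating $\mathcal L_n$ strips off those orbit pieces that avoid $A_n$, and a telescoping/inclusion-exclusion argument expresses
$$
\mu\Bigl(\sum_{i=0}^{n-1}\mathbf 1_{A_n}\circ T^i=k\Bigr)
$$
in terms of applying $\mathcal L_n$ and $\mathcal L-\mathcal L_n=\mathcal L(\mathbf 1_{A_n}\cdot{})$ alternately $n$ times and integrating against $\mathbf 1$. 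So the whole theorem reduces to controlling the dominant eigenvalue $\lambda_n$ of $\mathcal L_n$ (and showing the $N_n$-part is negligible after $n$ iterations): one needs $\lambda_n^n\to e^{-t}$, or more precisely $n(1-\lambda_n)\to t$, together with the expansion coefficients to reproduce the Poisson weights $t^k/k!$.

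The first block of work is to verify the hypotheses (i)--(iii) of Theorem \ref{thm:weakperturb}. Hypothesis (ii) (the uniform Doeblin--Fortet inequality for $\mathcal L_n^k$) follows from Proposition \ref{prop:df} applied with $\omega=\mathbf 1_{A_n^c}$, once one checks $D_\alpha\mathbf 1_{A_n^c}$ is uniformly bounded; this is where assumption (a) enters, since $A_n\in\sigma(\alpha_0^{m-1})$ forces $D_\alpha\mathbf 1_{A_n}\le r^{-(m-1)}$, uniformly in $n$. Hypothesis (i) ($\|\mathcal L_n^k(f)\|_1\le C\|f\|_1$) is immediate since $\mathcal L$ is a contraction on $L^1$ and $0\le\mathbf 1_{A_n^c}\le 1$. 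Hypothesis (iii) is the quantitative smallness: $\|(\mathcal L-\mathcal L_n)(f)\|_1=\|\mathcal L(\mathbf 1_{A_n}f)\|_1\le\|\mathbf 1_{A_n}f\|_1\le\|f\|_\infty\mu(A_n)\le\|f\|\,\mu(A_n)$, so one may take $\tau_n=\mu(A_n)$, which tends to $0$ because $n\mu(A_n)\to t$. Theorem \ref{thm:weakperturb} then hands us, for large $n$, the spectral decomposition $\mathcal L_n=\lambda_nP_n+N_n$ with $\|N_n^k\|\le K(1-\delta)^k$, $|\lambda_n-1|\le\delta$, and the crucial continuity estimates $\mathfrak n((P_n-P_0)f)\le K\mu(A_n)^\eta\|f\|$ and $\|P_nf\|\le K\mathfrak n(P_nf)$.

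The second, and genuinely delicate, block is the asymptotics of $1-\lambda_n$. Writing $\lambda_n=\int\mathcal L_n(h_n)\,d\mu/\int h_n\,d\mu$ for the top eigenfunction $h_n$ of $\mathcal L_n$ (normalized by $P_n$), and using $h_n\to\mathbf 1$ in the weak sense provided by Theorem \ref{thm:weakperturb}(2)--(3), one gets
$$
1-\lambda_n=\int(\mathcal L-\mathcal L_n)(h_n)\,d\mu\Big/\!\int h_n\,d\mu
=\int_{A_n}h_n\,d\mu\Big/\!\int h_n\,d\mu
\approx\mu(A_n)\bigl(1+o(1)\bigr).
$$
Hence $n(1-\lambda_n)\to t$. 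The subtlety is that this first-order computation is not enough by itself: one must show the error terms are $o(1/n)$, i.e. that $n\bigl(\int_{A_n}h_n\,d\mu-\mu(A_n)\bigr)\to 0$ and that the contribution of $N_n$ across $n$ iterations vanishes. The latter is controlled by $\|N_n^n\|\le K(1-\delta)^n\to 0$. For the former, the perturbative estimate only gives $\|h_n-\mathbf 1\|_1=O(\mu(A_n)^\eta)$, which naively yields an error $O(n\mu(A_n)^{1+\eta})$ — not automatically small. This is where assumption (b) must be used: the hypothesis $\mu(A_n\cap T^{-i}A_n)/\mu(A_n)\to 0$ for each $i$ (combined, for the uniformity over all $i\le m$-ish short returns, with Corollary \ref{lem:relidp} to bound returns at scales comparable to $n$) is exactly what rules out the "clustering" that produces a compound-Poisson correction; quantitatively it should let one replace the crude $\mu(A_n)^\eta$ bound by a genuine $o(1)$ improvement in the relevant pairing $\int_{A_n}(h_n-\mathbf 1)\,d\mu$, by expanding $h_n$ as a Neumann-type series in $\mathcal L(\mathbf 1_{A_n}\cdot{})$ and observing each term is governed by a short-return quantity $\mu(A_n\cap T^{-i}A_n)$.

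Finally, once $n(1-\lambda_n)\to t$ and the $N_n$-contribution is shown negligible, the combinatorial expansion for $\mu(\sum_{i<n}\mathbf 1_{A_n}\circ T^i=k)$ collapses, term by term, to $\binom{n}{k}\lambda_n^{\,n-k}(1-\lambda_n)^k$ up to errors that vanish, and $\binom{n}{k}(1-\lambda_n)^k\lambda_n^{n-k}\to\frac{t^k}{k!}e^{-t}$ by the usual Poisson limit of the binomial. I expect the main obstacle to be precisely the second block: extracting, from the two weak hypotheses (a)--(b) plus the Keller--Liverani output, a bound on $n(1-\lambda_n-\mu(A_n))$ that is $o(1)$, since the perturbation is order $\mu(A_n)\sim t/n$ and the a priori eigenfunction error is only a fractional power $\mu(A_n)^\eta$; turning assumption (b) into the needed cancellation in the Neumann series is the heart of the matter.
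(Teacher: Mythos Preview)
Your overall architecture --- Keller--Liverani perturbation plus asymptotics of the top eigenvalue --- matches the paper's, and your verification of hypotheses (i)--(iii) is essentially identical to what the paper does. The execution diverges in two places, one structural and one at exactly the spot you flag as the crux.

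First, the paper does not use the hole operator $\mathcal L_n f=\mathcal L(\mathbf 1_{A_n^c}f)$ followed by a combinatorial reassembly. Instead it fixes $s\ge 0$ and sets $\mathcal L_n f=\mathcal L(e^{-s\mathbf 1_{A_n}}f)$, so that $\int\mathcal L_n^n(\mathbf 1)\,d\mu=\int e^{-sS_{n,n}}\,d\mu$ is literally the Laplace transform of the count. Once one shows $\frac{1-\lambda_n}{(1-e^{-s})\mu(A_n)}\to 1$, the conclusion $\lambda_n^n\to e^{-t(1-e^{-s})}$ is immediate and convergence of Laplace transforms finishes the proof. Your route --- reconstituting $\mu(S_{n,n}=k)$ as an approximate binomial $\binom{n}{k}\lambda_n^{n-k}(1-\lambda_n)^k$ --- is the original Hirata scheme and can be pushed through, but it requires controlling all the $N_n$-cross-terms arising from the $k$ insertions of $(\mathcal L-\mathcal L_n)$, which is genuine extra work that the Laplace device sidesteps entirely.

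Second, and more importantly, the paper does not resolve the eigenvalue asymptotics via a Neumann series for the eigenfunction. It instead derives (by a telescoping computation from $\lambda_n^{k+1}-\lambda_n^k$) the exact identity, valid for every $k$,
\[
\frac{1-\lambda_n}{(1-e^{-s})\mu(A_n)}
=1+\frac{1}{\mu(A_n)\,\lambda_n^{k}}\int_{A_n}N^k\bigl(e^{-sS_{n,k}}f_n\bigr)\,d\mu,
\]
where $N=\mathcal L-P_0$ is the \emph{unperturbed} remainder and $f_n$ is the eigenfunction of $\mathcal L_n$, normalized so that $\int f_n\,d\mu=1$ and shown (via the Keller--Liverani estimates (2)--(3)) to have uniformly bounded $\|\cdot\|$-norm. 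One then lets $k=k(n)\to\infty$ slowly enough that $|\lambda_n|^k$ stays bounded below, picks an intermediate scale $\ell=\ell(n)\to\infty$ with $\ell\mu(A_n)\to 0$, and splits $e^{-sS_{n,k}}=e^{-sS_{n,k-\ell}}+e^{-sS_{n,k-\ell}}(e^{-sS_{n,\ell}\circ T^{k-\ell}}-1)$. The first piece is killed by $\|N^\ell\|$; the second reduces to $\sum_{i=1}^\ell\mu(A_n\cap T^{-i}A_n)$, where continued-fraction mixing (Proposition~\ref{prop:cfmixing}) handles the range $m<i\le\ell$ uniformly as $O(\ell\mu(A_n)^2)$ and assumption~(b) handles the finitely many $i\le m$. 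This identity is precisely the ``missing cancellation'' you anticipated; your Neumann-series idea is morally the Keller--Liverani escape-rate formula of \cite{KellerLiverani2009} (which the paper notes in a remark could also be made to work), but the direct identity is shorter and avoids iteration. Incidentally, Corollary~\ref{lem:relidp} plays no role in the proof of Theorem~\ref{thm:main} --- it concerns cylinders of growing length and is used only for Corollary~\ref{cor:poissoncor}(2).
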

\begin{proof}
Let $S_{n, k}=\mathbf 1_{A_n}+\ldots+\mathbf 1_{A_n}\circ T^{k-1}$. We will show that the Laplace transform of $S_{n, n}$ converges to the Laplace transform of the Poisson distribution with parameter $t$,
that is, $\int e^{-sS_{n, n}}d\mu\to e^{-t(1-e^{-s})}$ as $n\to\infty$ for every $s\geqslant 0$. Fix an arbitrary $s\geqslant 0$. Define $\mathcal L_n$ by $$\mathcal L_n (f):=\mathcal L(e^{-s \mathbf 1_{A_n}}f),$$ then for all $k\in\mathbb N$ $$\mathcal L_n^k(f)=\mathcal L^k (e^{-s S_{n,k}}f).$$ 
Set $\mathcal L_0:=\mathcal L$. We verify the conditions (i)-(iii) in Theorem \ref{thm:weakperturb} for operators $\{\mathcal L_n\}_{n\in\mathbb N_0}$ on  $(L, \|\cdot\|)$, a second norm $\frak n(\cdot)=\|\cdot\|_1$, and $\tau_n=(1-e^{-s})\mu(A_n)$.

It is clear that $\|\mathcal L_n^k\|_1\leqslant 1$ for all $n, k\in\mathbb N_0$, so condition (i) is satisfied. Because $A_n\in \sigma({\alpha_0^{m-1}})$, $A_n$ is a countable union of cylinder sets in $\alpha_0^{m-1}$. Let $r\in (0,1)$ be the constant in Definition \ref{def:gibbsmarkov} (4). So $D_\alpha \mathbf 1_{A_n}=0$ when $m=1$, and $D_\alpha \mathbf 1_{A_n}\leqslant r^{-{m}}$ when $m\geqslant 2$. Now Proposition \ref{prop:df} implies a uniform Doeblin-Fortet inequality for $\mathcal L_n$: there are constants $C_1, C_2>0$ such that for all $n,k\in\mathbb N_0$ and $f\in L$,
$$\|\mathcal L_n^k (f)\|\leqslant (C_1+C_2r^{-m}) (r^k\|f\|+ \|f\|_1).$$
This inequality verifies condition (ii). Condition (iii) also holds because
\begin{align*}
\|(\mathcal L_0-\mathcal L_n)(f)\|_1&=\|\mathcal L ((\mathbf 1-e^{-s \mathbf 1_{A_n}})\cdot f)\|_1\\
&\leqslant \int_\Omega (\mathbf 1-e^{-s \mathbf 1_{A_n}})|f| d\mu=(1-e^{-s})\int_{A_n} |f|d\mu\\
&\leqslant (1-e^{-s})\|f\|\mu(A_n)=\tau_n\|f\|.  
\end{align*}
In addition $\mathcal L_0$ has a spectral gap as stated in \eqref{eq:Lspecgap}.
Fix $r'\in(r,1)$ and $\delta>0$ such that $\{|z-1|\leqslant \delta\}$ is disjoint from $\{|z|\leqslant r'\}$. It follows from Theorem \ref{thm:weakperturb} that there exist $n_0\in\mathbb N, \eta\in (0,1), K>0$ such that for every $n\geqslant n_0$, $\frak r_{\rm ess}(\mathcal L_n)\leqslant r'$, $\mathcal L_n$ has a simple, unique maximal eigenvalue $\lambda_n$ and one can decompose 
$\mathcal L_n=\lambda_n P_n+N_n$ in the form of \eqref{eq:Lspecgap} satisfying additionally 
\begin{enumerate}
\item $|\lambda_n-1|\leqslant \eta$,
\item $\int |P_n (f)-\int f d\mu| d\mu\leqslant K\mu(A_n)^\eta\|f\|$ for any $f\in L$,
\item $\|P_n (f)\|\leqslant K \int |P_n (f)| d\mu$ for any $f\in L$,
\item $\|N_n^k\|\leqslant K (1-\delta)^k$ for any $k\in\mathbb N$. 
\end{enumerate}
Then one has, since $\mathcal L_n^n(\mathbf 1)=\mathcal L^n (e^{-s S_{n,n}})$, 
$$\int e^{-sS_{n,n}}d\mu= \int \mathcal L_n^n (\mathbf 1)d\mu=\lambda_n^n\int P_n(\mathbf 1)d\mu+\int N_n^n(\mathbf 1)d\mu,$$
hence $$\lim_{n\to\infty}\int e^{-sS_{n,n}}d\mu=\lim_{n\to\infty} \lambda_n^n$$
whenever the right-hand limit exists. The existence of the limit is implied by the following lemma.
\begin{lemma}\label{lem:ev}
$$\lim_{n\to\infty}\frac{1-\lambda_n}{(1-e^{-s})\mu(A_n)}=1.$$
\end{lemma}
\begin{proof}
Suppose $n\geqslant n_0$. Let $f_n$ be an eigenfunction of $\mathcal L_n$ with repect to $\lambda_n$, then $P_n(f_n)=f_n$ and $N_n(f_n)=0$. Note that $\int f_n d\mu\neq 0$, otherwise using the above properties (2) and (3) about $P_n$, $$\|f_n\|=\|P_nf_n\|\leqslant K\int |P_n(f_n)|d\mu=K \int \left|P_n(f_n)-\int f_nd\mu\right|d\mu\leqslant K^2\mu(A_n)^\eta \|f_n\|.$$ This yields a contradiction when $n\to\infty$. 
So we can assume that $\int f_n d\mu=1$. Repeat the previous argument to see that $$(1-K^2\mu(A_n)^\eta)\|f_n\|\leqslant K.$$
Hence we can further assume that $\|f_n\|$ is uniformly bounded, say $\|f_n\|\leqslant C$ for all $n$.
For every $k\in\mathbb N,$
\begin{equation}\label{eq:eigenvk}
1-\lambda_n^k=(1-\lambda_n^k)\int f_n d\mu=\int (f_n-\mathcal L_n^k (f_n)) d\mu =\int (1-e^{-sS_{n,k}})f_n d\mu.
\end{equation}
When $k=1$ this equation says
$$1-\lambda_n=(1-e^{-s})\int_{A_n}f_nd\mu,$$
so
\begin{equation}\label{eq:eigenvlb}
|\lambda_n|=\left|1-(1-e^{-s})\int_{A_n}f_nd\mu\right|\geqslant 1-(1-e^{-s})C\mu(A_n).
\end{equation}
%Recall that  $S_{n,k}=\mathbb 1_{A_n}+\ldots+\mathbb 1_{A_n}\circ T^{k-1}$. 
In \eqref{eq:eigenvk} replace $k$ by $k+1$ to get
\begin{align*}
1-\lambda_n^{k+1}&=\int (1-e^{-sS_{n,k+1}})f_n d\mu\\
%&=\int_{(T^{-k}A_n)^c} (1-e^{-sS_{n,k}})f_n d\mu + \int_{T^{-k}A_n} (1-e^{-s} e^{-sS_{n,k}})f_n d\mu \\
&=\int_{(T^{-k}A_n)^c} (1-e^{-sS_{n,k}})f_n d\mu + \int _{T^{-k}A_n} (1-e^{-sS_{n,k}})f_n d\mu \\
&\qquad+ \int_{T^{-k}A_n} (e^{-s S_{n,k}}- e^{-s} e^{-sS_{n,k}})f_n d\mu\\
&=\int (1-e^{-s S_{n,k}})f_n d\mu + (1-e^{-s})\int_{T^{-k}A_n}  e^{-s S_{n,k}}f_n d\mu \\
&=1-\lambda_n^k+(1-e^{-s}) \int_{T^{-k}A_n}  e^{-sS_{n,k}}f_n d\mu.
\end{align*}
Therefore,
\begin{align*}
\lambda_n^k(1-\lambda_n)%&=(1-e^{-s}) \int_{T^{-k}A_n}  e^{-sS_{n,k}}f_n d\mu\\
&=(1-e^{-s})\int  e^{-s S_{n,k}}f_n\cdot  \mathbf 1_{A_n}\circ T^k d\mu\\
&=(1-e^{-s})\int \mathcal L^k (e^{-s S_{n,k}} f_n) \cdot \mathbf 1_{A_n}d\mu\\
&=(1-e^{-s})\int_{A_n}  \left(\int e^{-sS_{n,k}}f_n d\mu+N^k (e^{-sS_{n,k}}f_n)\right)d\mu\\
&=(1-e^{-s})  \left(\mu(A_n)\int \mathcal L_n^k (f_n) d\mu+\int _{A_n}N^k (e^{-sS_{n,k}}f_n)d\mu\right)\\
&=(1-e^{-s})  \left(\mu(A_n) \lambda_n^k+\int_{A_n}N^k (e^{-sS_{n,k}}f_n)d\mu\right).
\end{align*}
We obtain that for every $k\in\mathbb N$
\begin{equation}\label{eq:eigenvNk}\frac{1-\lambda_n}{(1-e^{-s})\mu(A_n)}= 1+\frac{1}{\mu(A_n)\lambda_n^k} \int_{A_n} N^k (e^{-sS_{n,k}}f_n) d\mu.\end{equation}
In view of \eqref{eq:eigenvlb} we  can choose $k=k(n)$ so that $k(n)\to\infty$ as $n\to\infty$ and that $|\lambda_n|^k>\frac13$ for large $n$. %It can be realized for example by taking $k=1/((1-e^{-s})C\mu(A_n))$. 
%$$\mu(A_n\cap T^{-i}A_n)=o(1)\mu(A_n)$$
%or $i=1,2,\ldots$, where $o(1)$ is independent of $i$
Moreover choose $\ell=\ell(n)\in\mathbb N<k(n)$ so that $\ell(n)\to\infty$ and $\ell\mu(A_n)\to 0$ when $n\to\infty$. Now the continued-fraction mixing property (Proposition \ref{prop:cfmixing}) and the assumption $A_n\in \sigma(\alpha_0^{m-1})$ imply that there is a constant $C_3>0$ such that
$$\sum_{i=m+1}^\ell \mu(A_n\cap T^{-i}A_n)\leqslant C_3 (\ell-m) \mu(A_n)^2.$$
Together with the assumption (b) $\mu(A_n\cap T^{-i}A_n)=o(\mu(A_n))$ for all $i\in \mathbb N$, we get that
 $$ \lim_{n\to\infty}\frac1{\mu(A_n)}\sum_{i=1}^\ell \mu(A_n\cap T^{-i}A_n)=0.$$
Back to \eqref{eq:eigenvNk}, by the choice of $k$, it suffices to show $$\int_{A_n} N^k (e^{-sS_{n,k}}f_n) d\mu=o(\mu(A_n)).$$  Separate it into two parts by $$e^{-sS_{n,k}}= e^{-sS_{n,k-\ell}}+e^{-sS_{n,k-\ell}}(e^{-s S_{n,\ell}\circ T^{k-\ell}}-1).$$
The first part \begin{align*}\left|\int _{A_n}N^k (e^{-sS_{n,k-\ell}}f_n) d\mu\right|
&\leqslant K (1-\delta)^\ell\mu(A_n) \|N^{k-\ell} (e^{-sS_{n,k-\ell}}f_n) \|\\
&= K(1-\delta)^\ell\mu(A_n) \|\mathcal L^{k-\ell} (e^{-sS_{n,k-\ell}}f_n) -\int e^{-sS_{n,k-\ell}}f_nd\mu\|\\
&=K(1-\delta)^\ell\mu(A_n) \|\mathcal L_n^{k-\ell} (f_n) -\int e^{-sS_{n,k-\ell}}f_nd\mu\|\\
%&\leqslant K(1-\delta)^\ell\mu(A_n) \left(\|\mathcal L_n^{k-\ell} (f_n)\|+\left|\int e^{-sS_{n,k-\ell}}f_nd\mu\right|\right)\\
%&\leqslant K (1-\delta)^\ell\mu(A_n)  (C_1+C_2r^{-m}+1)\|f_n\|
&\leqslant K (1-\delta)^\ell (C_1+C_2r^{-m}+1)C\mu(A_n)=o(\mu(A_n)).
\end{align*}
%where at the last step we use that  $\|f_n\|$ is uniformly bounded  and $\ell\to\infty$.
The second part, letting $g:=e^{-sS_{n,k-\ell}}(e^{-sS_{n,\ell}\circ T^{k-\ell}}-1)$,
\begin{align*} 
&\quad \left|\int_{A_n} N^k (gf_n) d\mu\right|=\left|\int_{A_n}\mathcal L^k(gf_n)d\mu-\mu(A_n)\int gf_n d\mu\right|\\
&=\left|\int gf_n\cdot \mathbf 1_{A_n}\circ T^k d\mu -\mu(A_n)\int gf_n d\mu\right|\\
&\leqslant  2\|f_n\|_\infty \mu(\{S_{n,\ell}\circ T^{k-\ell}\neq 0\}\cap T^{-k}A_n)+2\mu(A_n)\|f_n\|_\infty\mu(\{S_{n,\ell}\circ T^{k-\ell}\neq 0\})\\
&\leqslant 2C \mu\Bigl (T^{-k}A_n\cap\bigcup_{i=k-\ell}^{k-1}T^{-i}A_n\Bigr) +2C\mu(A_n)\mu\Bigl(\bigcup_{i=k-\ell}^{k-1}T^{-i}A_n\Bigr)\\
&\leqslant 2C\sum_{i=1}^\ell \mu(A_n\cap T^{-i}A_n)+2C\ell\mu(A_n)^2=o(\mu(A_n)).
\end{align*}
%by the choice of $\ell$.
Hence we get the desired limit.
\end{proof}
Back to the proof of Theorem \ref{thm:main}. As a byproduct of Lemma \ref{lem:ev}, $|\lambda_n|\leqslant 1$ for large $n$. Then, 
$$|\lambda_n^n-(1-(1-e^{-s})\mu(A_n))^n|\leqslant n\cdot |\lambda_n-1+(1-e^{-s})\mu(A_n)|.$$ 
Because $n\mu(A_n)\to t$ and because of the lemma, the right-hand side converges to $0$, then $$\lim_{n\to\infty}\lambda_n^n=\lim_{n\to\infty}(1-(1-e^{-s})\mu(A_n))^n=e^{-t(1-e^{-s})}.$$
\end{proof}

In the following situations, assumption (b) can be easily verified.
\begin{corollary}\label{cor:poissoncor} The Poisson limit theorem holds when
\begin{enumerate}
\item $A_n\in\sigma(\alpha)$ for all $n\in\mathbb N$, or
\item for some $m\in\mathbb N$, $A_n\in\alpha_0^{m-1}$ for all $n\in\mathbb N$.
\end{enumerate}
\end{corollary}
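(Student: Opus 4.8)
The plan is to derive both statements from Theorem \ref{thm:main} by checking its hypotheses (a) and (b); hypothesis (a) is immediate in each case, so all the work lies in verifying the short-return condition (b), namely $\mu(A_n\cap T^{-i}A_n)=o(\mu(A_n))$ for each fixed $i\in\mathbb N$. The only fact I will exploit besides the tools already available is that $n\mu(A_n)\to t$ forces $\mu(A_n)\to 0$; hence it suffices to bound $\mu(A_n\cap T^{-i}A_n)/\mu(A_n)$ by a quantity that tends to $0$ together with $\mu(A_n)$.

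For case (1), $A_n\in\sigma(\alpha)$, so (a) holds with $m=1$ and $A_n$ is a (possibly countably infinite) union of atoms $a\in\alpha=\alpha_0^{0}$. For each such atom $a$ and each $i\in\mathbb N$ I apply continued-fraction mixing (Proposition \ref{prop:cfmixing}) with cylinder length $1$ and exponent $i-1\geqslant 0$, giving $\mu(a\cap T^{-i}A_n)\leqslant (1+K)\mu(a)\mu(A_n)$. Summing over the atoms $a\subset A_n$ yields $\mu(A_n\cap T^{-i}A_n)\leqslant (1+K)\mu(A_n)^2$, so $\mu(A_n\cap T^{-i}A_n)/\mu(A_n)\leqslant (1+K)\mu(A_n)\to 0$. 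Thus (b) holds and Theorem \ref{thm:main} applies.

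For case (2), $A_n\in\alpha_0^{m-1}$ is a single cylinder of cylindrical length $\ell_n\leqslant m$. Fix $i\in\mathbb N$ and split according to whether the return is long or short. If $i\geqslant\ell_n$, I apply Proposition \ref{prop:cfmixing} with cylinder length $\ell_n$ and exponent $i-\ell_n\geqslant 0$ to get $\mu(A_n\cap T^{-i}A_n)\leqslant (1+K)\mu(A_n)^2$. If $i<\ell_n$, mixing is silent and I instead invoke Corollary \ref{lem:relidp} with $n=\ell_n$ and $k=i\leqslant\ell_n$, obtaining $\mu(A_n\cap T^{-i}A_n)\leqslant M_1\mu(A_n)^{1+\frac1{1+\ell_n}}\leqslant M_1\mu(A_n)^{1+\frac1{1+m}}$, where the last inequality uses $\ell_n\leqslant m$ so that $\tfrac1{1+\ell_n}\geqslant\tfrac1{1+m}$. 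In either subcase $\mu(A_n\cap T^{-i}A_n)/\mu(A_n)\leqslant\max\{(1+K)\mu(A_n),\,M_1\mu(A_n)^{\frac1{1+m}}\}\to 0$, so (b) holds and Theorem \ref{thm:main} again gives the Poisson limit.

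The only delicate point is the short-return range $i<\ell_n$ in case (2): there the mixing estimate gives nothing, and one must use the Renyi-property consequence in Corollary \ref{lem:relidp}. The subtlety is to extract a power of $\mu(A_n)$ strictly larger than $1$ that is uniform in $n$; this is exactly what $1+\tfrac1{1+\ell_n}\geqslant 1+\tfrac1{1+m}$ supplies, since the cylindrical lengths $\ell_n$ are bounded by $m$.
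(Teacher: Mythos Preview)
Your argument is correct and matches the paper's approach: verify hypothesis (b) of Theorem~\ref{thm:main} using continued-fraction mixing in case~(1) and Corollary~\ref{lem:relidp} in case~(2). Your treatment of case~(2) is in fact more explicit than the paper's one-line proof, since you correctly observe that Corollary~\ref{lem:relidp} only covers the short-return range $i\leqslant\ell_n$ and that one still needs the mixing estimate for $i\geqslant\ell_n$; the paper silently relies on the same mixing bound already used in case~(1) for this range.
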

\begin{proof}
In the first situation, assumption (b) can be deduced from continued-fraction mixing. In the second situation, assumption (b) can be deduced from Corollary \ref{lem:relidp}.
\end{proof}

\begin{remark}
Here is an example when $A_n$ satisfies assumption (a) but not assumption (b). Consider the continued fraction map. Let $A_n=[1, n]\cup [n, 1]\in\sigma (\alpha_0^1)$. That is, $A_n=\{x\in (0,1): \text{either } a_1=1, a_2=n, \text{ or } a_1=n, a_2=1.\}$. Then $\mu(A_n)=O(n^{-2})$. But $A_n\cap T^{-1}A_n=[1, n, 1]\cup [n, 1, n]$ has measure of the order $O(n^{-2})$ too.
\end{remark}

\begin{remark}Applying Theorem \ref{thm:weakperturb} to $\tilde{\mathcal L}_{n}(f):=\mathcal L(\mathbf 1_{A_n^c}\cdot f)$,  one can again decompose $\tilde{\mathcal L}_{n}$ likewise with a leading eigenvalue $\tilde{\lambda}_n$. Keller and Liverani's formula in \cite{KellerLiverani2009} for escape rate gives $$\lim_{n\to\infty}\frac{1-\tilde\lambda_n}{\mu(A_n)}=1.$$
This formula also holds when applying Theorem \ref{thm:weakperturb} to more general cases, which require that $\|\mathbf 1_{A_n}\|$ is uniformly bounded for all $n$, like in assumption (a), and that only a small portion of $\{A_n\}$ will return in short times, like in assumption (b). Consequently, it confirms that the distribution of the first return time tends to an exponential distribution. Note that continued-fraction mixing (or $\psi$-mixing) is not necessary in this line of proof, whereas a direct calculation of escape rate assuming $\psi$-mixing but not necessarily spectral gap can be found in \cite{Zhang2016}.

For $\mathcal L_n$ considered in this note (suited to study successive return times), the same formula in \cite{KellerLiverani2009} implies that 
$$\lim_{n\to\infty}\frac{1-\lambda_n}{(1-e^{-s})\mu(A_n)}=1-\sum_{k=0}^\infty q_k,$$
where \begin{align*}
q_k&=\lim_{n\to\infty}\frac{1}{(1-e^{-s})\mu(A_n)}\int(\mathcal L-\mathcal L_n)\mathcal L_n^k(\mathcal L-\mathcal L_n)(\mathbf 1)d\mu\\
&=\lim_{n\to\infty}\frac{1}{\mu(A_n)}\int e^{-sS_{n,k}}\cdot (2\cdot \mathbf 1_{A_n}- \mathbf 1_{T^{-1}A_n})\circ T^k d\mu.
%\\&=\lim_{n\to\infty}\frac{1}{\mu(A_n)}\int e^{-sS_{n,k}}\cdot (\mathbf 1_{A_n}+\mathbf 1_{A_n\cap T^{-1}A_n^c}- \mathbf 1_{A_n^c\cap T^{-1}A_n})\circ T^k d\mu.
\end{align*}
It is likely that one can go on to obtain the same Poisson limit theorem by arguments similar to the second half of the proof of Lemma \ref{lem:ev}.
\end{remark}

Lastly, we state some applications of Theorem \ref{thm:main} to continued fractions. Let $((0,1), \mu, T, \alpha)$ be the continued fraction map, where $d\mu=\frac{1}{\log2}\frac{1}{1+x} dx$, $T(x)=\{1/x\}$, and $\alpha=\{\{a_1=n\}: n\in\mathbb N\}$. As seen in Example \ref{ex:mip}, it is a probability-preserving, topologically mixing Gibbs-Markov map. %Suppose $x\in (0,1)$ has a continued fraction expansion in the form of $[a_1,a_2,\ldots]$. Let $\Omega=(0,1)$, $\mathcal B$ be the Borel $\sigma$-algebra, $\alpha=\{\{a_1=n\}: n\in\mathbb N\}$ the partition according to the first digit,  $\mu$ be the Gauss measure $\mu(A)=\frac{1}{\log2}\int_A \frac{1}{1+x} dx$.
Fix any real number $\theta>0$. %It can be shown that $a_i>\theta n$ if and only if $T^{i}(x)<\frac{1}{[\theta n]+1}$. 
By applying Corollary \ref{cor:poissoncor} (1) to $A_n=(0,\frac{1}{[\theta n]+1})\in\sigma(\alpha)$, we recover Doeblin's theorem in the introduction. Note that as mentioned in \cite{Iosifescu1977}  the Gauss measure can be replaced by the Lebesgue measure, because $|\mu(A)- {\rm Leb}(A)|\leqslant \epsilon_m$ for $A\in\sigma(\alpha_m^\infty)$, where $\epsilon_m\to 0$ as $m\to\infty$.

Choosing $A_n=\bigcup_{a_1,\ldots,a_m\geqslant [(\theta n)^{1/m}]}[a_1, \cdots, a_m]\in\sigma(\alpha_0^{m-1}),$ one can show that $A_n$ satisfies assumption (b) in Theorem \ref{thm:main}. Then an extension of Doeblin's theorem follows.
\begin{corollary} For every $m, k\in\mathbb N_0$
\begin{multline*}\lim_{n\to\infty} {\rm Leb}\{x\in(0,1): \text{there are exactly $k$ m-tuples } a_ia_{i+1}a_{i+m-1}
\text{ with all }\\ a_i,\ldots, a_{i+m-1}> (\theta n)^{1/m}, 1\leqslant i\leqslant n\}=\frac{1}{(\theta \log 2)^k}\frac1{k!} e^{-\frac1{\theta \log 2}}.
\end{multline*}
\end{corollary}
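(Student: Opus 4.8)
The plan is to derive this as a direct application of Theorem~\ref{thm:main} with the sequence $A_n=\bigcup_{a_1,\ldots,a_m\geqslant [(\theta n)^{1/m}]}[a_1,\ldots,a_m]\in\sigma(\alpha_0^{m-1})$, so assumption (a) holds with the fixed filtration level $m$. The combinatorial statement about the count of $m$-tuples of large consecutive partial quotients is exactly the event $\{\sum_{i=0}^{n-1}\mathbf 1_{A_n}\circ T^i=k\}$: indeed $\mathbf 1_{A_n}\circ T^i(x)=1$ precisely when $a_{i+1}(x),\ldots,a_{i+m}(x)$ are all $>(\theta n)^{1/m}$, so I will first spell out this translation and note that passing from the Gauss measure $\mu$ to Lebesgue measure introduces an error $\epsilon_m\to 0$ as in the remark after Doeblin's theorem (here $m$ is fixed, so this is a harmless once-and-for-all comparison; more carefully, since we can take the threshold inside a cylinder of bounded length, one uses $|\mu(A)-{\rm Leb}(A)|\leqslant\epsilon_0$ on $\sigma(\alpha_0^{m-1})$, or simply observes the two measures are boundedly equivalent with density $\tfrac1{\log 2}\tfrac1{1+x}\in[\tfrac1{2\log 2},\tfrac1{\log 2}]$ and that the Poisson limit is insensitive to such a comparison after using the mixing argument; I would phrase it via the $\epsilon_m$ estimate applied to the tail $\sigma$-algebra).

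Next I would check the normalization $n\mu(A_n)\to t$ with $t=\frac{1}{\theta\log 2}$. Here $\mu(\{a_1=j\})=\frac{1}{\log2}\log\frac{(j+1)^2}{j(j+2)}\sim\frac{1}{\log 2}\frac{1}{j^2}$, and by quasi-independence (Renyi's property, Proposition~\ref{prop:renyi}, or continued-fraction mixing) $\mu(A_n)$ is comparable to $\bigl(\sum_{j\geqslant[(\theta n)^{1/m}]}\mu(\{a_1=j\})\bigr)^m\sim\bigl(\frac{1}{\log 2}\frac{1}{(\theta n)^{1/m}}\bigr)^m=\frac{1}{(\log 2)^m}\frac{1}{\theta n}$. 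To pin down the constant exactly rather than just up to a bounded factor, I would compute $\mu(A_n)$ directly: $A_n=\bigcap_{i=0}^{m-1}T^{-i}B_n$ where $B_n=\bigcup_{j\geqslant[(\theta n)^{1/m}]}\{a_1=j\}=(0,\frac{1}{[(\theta n)^{1/m}]})$, and then use the exact formula for the $\mu$-measure of a cylinder $[a_1,\ldots,a_m]$ in terms of convergents, or more simply note $A_n$ is itself an interval-union whose Lebesgue measure is $\prod$ of ratios — actually the cleanest route is: $\mathrm{Leb}(A_n)=\sum\mathrm{Leb}[a_1,\ldots,a_m]$ over the relevant block, each $\mathrm{Leb}[a_1,\ldots,a_m]=\frac{1}{q_m(q_m+q_{m-1})}$, and an elementary estimate gives $\mathrm{Leb}(A_n)=\frac{1}{(\theta n)}+o(1/n)$ hence $\mu(A_n)=\frac{1}{\theta\log 2}\cdot\frac1n+o(1/n)$ after inserting the density and controlling it on the small set $A_n$ (whose diameter is $O((\theta n)^{-1/m})\to 0$, so the density is essentially constant $=\frac{1}{\log 2}$ there). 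So $t=\frac{1}{\theta\log 2}$.

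The substantive point, and the one the corollary's sentence ``one can show that $A_n$ satisfies assumption (b)'' is pointing at, is verifying $\mu(A_n\cap T^{-i}A_n)=o(\mu(A_n))$ for every fixed $i\in\mathbb N$. For $i\geqslant m$ this is immediate from continued-fraction mixing (Proposition~\ref{prop:cfmixing}): $\mu(A_n\cap T^{-i}A_n)\leqslant (1+K\theta^{i-m})\mu(A_n)^2=O(\mu(A_n)^2)=o(\mu(A_n))$ since $\mu(A_n)\to 0$. The remaining cases $1\leqslant i\leqslant m-1$ are where the overlap of the two blocks of $m$ large partial quotients forces a block of $2m-i>m$ consecutive large partial quotients: on $A_n\cap T^{-i}A_n$ the digits $a_1,\ldots,a_{2m-i}$ are all $\geqslant[(\theta n)^{1/m}]$, so $A_n\cap T^{-i}A_n\subset C_n:=\bigcup_{a_1,\ldots,a_{2m-i}\geqslant[(\theta n)^{1/m}]}[a_1,\ldots,a_{2m-i}]$, and by the same quasi-independence estimate $\mu(C_n)\leqslant \mathrm{const}\cdot\bigl(\sum_{j\geqslant[(\theta n)^{1/m}]}\mu(\{a_1=j\})\bigr)^{2m-i}=O\bigl(n^{-(2m-i)/m}\bigr)=o(n^{-1})=o(\mu(A_n))$ because $(2m-i)/m>1$. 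This last display is the only place any real (if routine) work happens; I expect it to be the main obstacle only in the bookkeeping sense — one must be careful that the Renyi/mixing constant multiplying the product of single-digit measures is uniform in $n$, which is exactly what Proposition~\ref{prop:renyi} (or Corollary~\ref{lem:relidp} applied on $\alpha_0^{2m-i-1}$) provides. Once (a), (b) and $n\mu(A_n)\to\frac{1}{\theta\log 2}$ are in hand, Theorem~\ref{thm:main} yields $\mu(\{\sum_{i=0}^{n-1}\mathbf 1_{A_n}\circ T^i=k\})\to \frac{1}{(\theta\log 2)^k}\frac1{k!}e^{-1/(\theta\log 2)}$, and the final sentence replaces $\mu$ by $\mathrm{Leb}$ via the $\epsilon_m$ comparison, completing the proof.
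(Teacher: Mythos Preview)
Your proposal is correct and follows precisely the route the paper indicates: apply Theorem~\ref{thm:main} to $A_n=\bigcup_{a_1,\ldots,a_m\geqslant [(\theta n)^{1/m}]}[a_1,\ldots,a_m]$, verify assumption~(b) by splitting into the overlapping range $1\leqslant i\leqslant m-1$ (where the intersection forces $2m-i>m$ large digits and Renyi's property gives measure $O(n^{-(2m-i)/m})=o(\mu(A_n))$) and the range $i\geqslant m$ (handled by continued-fraction mixing), then pass from Gauss to Lebesgue measure via the $\epsilon_m$ tail estimate. The paper itself gives no more than this one-sentence hint before the corollary, so your write-up in fact supplies the details the paper omits; the only minor wobble is your initial phrasing of the $\mu$-to-Lebesgue comparison, but you correctly land on the tail-$\sigma$-algebra argument the paper cites.
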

 
Choosing $A_n\in\alpha_0^{m-1}$ such that $n\mu(A_n)$ converges, one can find by Corollary \ref{cor:poissoncor} (2) limit laws for the number of occurrences of pattern $A_n$. For example, if one chooses $A_n=[[n^{1/4}], [n^{1/4}]]\in\alpha_0^2$ then
\begin{corollary} For every $k\in\mathbb N_0$
\begin{multline*}
\lim_{n\to\infty}{\rm Leb}\{x\in (0,1): \text{there are $k$ occurrences of $[n^{1/4}][n^{1/4}]$ in $[a_1, \ldots, a_{n+1}]$}\}\\=\frac{1}{(\log 2)^k}\frac1{k!} e^{-\frac1{\log 2}}.
\end{multline*}
\end{corollary}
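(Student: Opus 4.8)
The plan is to obtain this as an instance of Corollary~\ref{cor:poissoncor}(2) for the continued fraction map, followed by a passage from the Gauss measure $\mu$ to Lebesgue measure. Put $b_n:=[n^{1/4}]$ and let $A_n:=[b_n,b_n]$ be the cylinder of all $x\in(0,1)$ with $a_1(x)=a_2(x)=b_n$. Since $A_n$ is a cylinder of the fixed length $2$, hypothesis (a) of Theorem~\ref{thm:main} holds and hypothesis (b) is automatic by Corollary~\ref{cor:poissoncor}(2); the only input still to be supplied is the normalization $n\mu(A_n)\to t$ for a suitable $t>0$.

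To compute $\mu(A_n)$ I would use that the convergent denominators of the cylinder $[b_n,b_n]$ are $q_1=b_n$ and $q_2=b_n^2+1$, so
\[
{\rm Leb}(A_n)=\frac{1}{q_2(q_2+q_1)}=\frac{1}{(b_n^2+1)(b_n^2+b_n+1)}\sim b_n^{-4},
\]
and since $b_n=[n^{1/4}]$ gives $b_n^4/n\to1$, this yields $n\,{\rm Leb}(A_n)\to1$. Moreover $A_n\subset(1/(b_n+1),1/b_n]$ shrinks to the point $0$, so on $A_n$ the Gauss density $\frac1{\log2}\cdot\frac1{1+x}$ converges uniformly to $\frac1{\log2}$; hence $\mu(A_n)/{\rm Leb}(A_n)\to\frac1{\log2}$ and $n\mu(A_n)\to t:=\frac1{\log2}$. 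Corollary~\ref{cor:poissoncor}(2) then gives, for every $k\in\mathbb N_0$,
\[
\lim_{n\to\infty}\mu\Bigl(\Bigl\{x\in(0,1):\sum_{i=0}^{n-1}\mathbf 1_{A_n}\circ T^i(x)=k\Bigr\}\Bigr)=\frac{t^k}{k!}e^{-t}=\frac1{(\log2)^k}\frac1{k!}e^{-1/\log2}.
\]
Since $\mathbf 1_{A_n}\circ T^i(x)=1$ precisely when $a_{i+1}(x)=a_{i+2}(x)=b_n$, the sum inside equals the number of $j\in\{1,\dots,n\}$ with $a_j=a_{j+1}=b_n$, that is, the number of occurrences of $[n^{1/4}][n^{1/4}]$ in $[a_1,\dots,a_{n+1}]$; so this is the asserted identity with $\mu$ in place of ${\rm Leb}$.

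It remains to replace $\mu$ by ${\rm Leb}$, which I would do as in the remark following Doeblin's theorem in the introduction. Write $S_{n,n}=\sum_{i=0}^{n-1}\mathbf 1_{A_n}\circ T^i$ and $\tilde S_n=\sum_{i=m}^{n-1}\mathbf 1_{A_n}\circ T^i$. The event $\{\tilde S_n=k\}$ depends only on the partial quotients $a_{m+1},\dots,a_{n+1}$, hence lies in $\sigma(\alpha_m^\infty)$, so $|\mu(\tilde S_n=k)-{\rm Leb}(\tilde S_n=k)|\le\epsilon_m$ with $\epsilon_m\to0$. On the other hand $|S_{n,n}-\tilde S_n|\le m$, and since $\mu\asymp{\rm Leb}$ on $(0,1)$ and $\mu$ is $T$-invariant, both $\mu(S_{n,n}\ne\tilde S_n)$ and ${\rm Leb}(S_{n,n}\ne\tilde S_n)$ are at most $Cm\,\mu(A_n)=O(m/n)\to0$. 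Combining these estimates gives $\limsup_{n\to\infty}|{\rm Leb}(S_{n,n}=k)-\mu(S_{n,n}=k)|\le\epsilon_m$ for every fixed $m$, and letting $m\to\infty$ removes the error term, completing the proof. The one place requiring care is precisely this passage to Lebesgue measure: one must truncate the first $m$ coordinates, control the resulting discrepancy uniformly in $n$, and only then invoke the tail comparison $|\mu-{\rm Leb}|\le\epsilon_m$ on $\sigma(\alpha_m^\infty)$; everything else reduces to cited results and a direct computation.
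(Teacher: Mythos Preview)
Your proof is correct and follows exactly the route the paper indicates: invoke Corollary~\ref{cor:poissoncor}(2) for the single length-$2$ cylinder $A_n=[[n^{1/4}],[n^{1/4}]]$, check $n\mu(A_n)\to 1/\log 2$ directly, and then pass from the Gauss measure to Lebesgue via the tail estimate $|\mu(A)-{\rm Leb}(A)|\leqslant\epsilon_m$ on $\sigma(\alpha_m^\infty)$. The paper states this application without detail, so your explicit computation of ${\rm Leb}(A_n)=1/(q_2(q_2+q_1))$ and your truncation argument for the $\mu\to{\rm Leb}$ passage simply fill in what the paper leaves implicit.
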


\section*{Acknowledgment}
Major part of the work appeared in Chapter 3 of the author's thesis \cite{Zhang2015}. He thanks his advisor Prof. Manfred Denker for generous help. The author was supported by FAPESP grant \#2018/15088-4.
\bibliographystyle{plain}

\end{document}